\renewcommand{\orb}{\operatorname{\overline{orb}}}
\title[Decompositions of dynamical systems via the Koopman operator]{Decompositions of dynamical systems induced by the Koopman operator}
\date{\today}
\begin{document}

\begin{abstract}
  \setlength\parindent{0em}
  \setlength\parskip{0.3\baselineskip}

For a topological dynamical system we characterize the decomposition of the state space induced by the fixed space of the corresponding Koopman operator. For this purpose, we introduce a hierarchy of generalized orbits and obtain the finest decomposition of the state space into absolutely Lyapunov stable sets. Analogously to the measure-preserving case, this yields that the system is topologically ergodic if and only if the fixed space of its Koopman operator is one-dimensional.
\end{abstract}

\keywords{Koopman operator, fixed space, topological ergodicity, Hausdorffization, Lyapunov stability}
\subjclass[2010]{37B05, 47A35, 47B33}

\maketitle

\section{Introduction}

It is a common strategy to decompose a dynamical system into smaller parts and investigate these instead of the whole system. There exists a variety of such decompositions, e.g., by Conley (see \cite{conley} or \cite{norton}), the decomposition of the chain-recurrent set into chain components (see, e.g., \cite{shimomura}) or orbit-closure decompositions in \cite{gottschalk} to name a few.

In this paper we study a new decomposition of a \emph{topological dynamical system} $(K;\varphi)$, consisting of a compact Hausdorff space $K$ and a continuous map \[\varphi\colon K\to K.\] To do so, we consider the corresponding \emph{Koopman operator} \[\T f :=f\circ\varphi\] on the $C^*$-algebra $\mathrm{C}(K)$ of all continuous complex-valued functions on $K$ and its fixed space \[\fix\T:=\left\{f\in\mathrm{C}(K)\colon\T f=f\right\}.\] This fixed space yields a decomposition of $K$ into disjoint $\varphi$-invariant and closed sets (see Section \ref{sec2}). To characterize this dynamically, we introduce a transfinite hierarchy of \emph{generalized $\varphi$-orbits}. Moreover, we show that $\fix\T$ induces the finest decomposition of $K$ into \emph{absolutely Lyapunov stable} subsets (see Theorem \ref{absliap}). 

As a consequence, we obtain that this decomposition is trivial, i.e., the fixed space of $\T$ has dimension $1$, if and only if the system $(K;\varphi)$ is \emph{topologically ergodic} meaning that there exists $x\in K$ with generalized orbit $\sorb(x)=K$. This is by analogy with a measure-preserving dynamical system $(\Omega, \Sigma, \mu;\varphi)$ being \emph{ergodic} (i.e., indecomposable) if and only if the fixed space \[\fix\T:=\left\{f\in \mathrm{L}^1(\Omega, \Sigma, \mu)\colon\T f=f\right\}\] of the corresponding Koopman operator on $\mathrm{L}^1(\Omega, \Sigma, \mu)$ is one-dimensional. A variety of examples demonstrate the complexity of the topological situation. I thank Nikolai Edeko for providing some of them and Roland Derndinger for many helpful discussions.

\section{The decomposition of $K$ corresponding to $\fix\T$}

\label{sec2}
Since the fixed space $\fix\T$ is a $\T$-invariant $C^*$-subalgebra of $\mathrm{C}(K)$, the Gelfand-Naimark theorem shows that it is isomorphic to a space $\mathrm{C}(L)$ for some compact Hausdorff space $L$, called the \emph{fixed factor} or \emph{maximal trivial factor} of $K$ (see \cite{nied}). The embedding $\mathrm{C}(L)\hookrightarrow \mathrm{C}(K)$ yields a surjection $p\colon K\twoheadrightarrow L$, called \emph{factor map} (see, e.g., \cite[Chapter 2.2]{efhn}). This induces a disjoint splitting

\[
K=\dot{\bigcup\limits_{l\in L}}\,p^{-1}(\{l\})
\]
into closed $\varphi$-invariant sets, hence an equivalence relation $\sim$ on $K$ with equivalence classes $p^{-1}(\left\{l\right\})$, $l\in L$. Our problem is the following.
\begin{problem}
\label{prob}
Describe this equivalence relation by dynamical and topological properties of $(K;\varphi)$.
\end{problem}

For this purpose we introduce some technical terms.

\begin{definition}
\label{levelsets}
\begin{enumerate}
\item
A nonempty set $M\subseteq K$ is called a \emph{level set of $\fix\T$} if $f\vert_M$ is constant for all $f\in\fix\T$.
\item
A level set $M$ is called \emph{maximal} if for any other level set $M'\subseteq K$ with  $M\subseteq M'$  already $M'=M$.
\end{enumerate}
\end{definition}
\begin{remark}
\label{maxlevset}
\begin{enumerate}
\item 
Maximal level sets exist by Zorn's lemma and are closed.
\item 
\label{maxlevsetchar}
A set $M\subseteq K$ is a maximal level set of $\fix\T$ if and only if $M=p^{-1}(\{l\})$ for some $l\in L$. 
\end{enumerate}
\end{remark}

\begin{samepage}
\begin{proposition}
\label{technik}
Let $(K;\varphi)$ be a topological dynamical system and identify $\fix\T$ with $\mathrm{C}(L)$. Let $\sim$ be any equivalence relation on $K$ with canonical projection $\pi\colon K\to K/{\sim}$ satisfying 
\begin{enumerate}[(i)]
\item
\label{inv}
$\varphi(x)\sim x$ and 
\item
\label{lev}
$\pi^{-1}([x])$ is a level set of $\fix\T$
\end{enumerate}
 for all $x\in K$. Then the following are equivalent. 
\begin{enumerate}
\item \label{44} For each $x\in K$ the preimage $\pi^{-1}([x])$ is a maximal level set of $\fix\T$.
\item \label{33} With respect to the quotient topology $K/{\sim}$ is Hausdorff.
\item \label{cc} $K/{\sim}\cong L$.
\end{enumerate}
\end{proposition}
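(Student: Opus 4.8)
The plan is to establish the cycle of implications $\eqref{44}\Rightarrow\eqref{cc}\Rightarrow\eqref{33}\Rightarrow\eqref{44}$, exploiting the universal property of the fixed factor together with the characterization of maximal level sets in Remark \ref{maxlevset}\eqref{maxlevsetchar}. Throughout I work with the factor map $p\colon K\twoheadrightarrow L$ induced by the embedding $\mathrm{C}(L)\cong\fix\T\hookrightarrow\mathrm{C}(K)$, and I note first that hypothesis \eqref{lev} means exactly that every fibre $\pi^{-1}([x])$ is contained in some fibre $p^{-1}(\{l\})$; hence there is a well-defined surjection $q\colon K/{\sim}\twoheadrightarrow L$ with $p=q\circ\pi$. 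The content of the proposition is that $q$ is a homeomorphism precisely when the quotient is Hausdorff, equivalently when $q$ is injective.

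For $\eqref{44}\Rightarrow\eqref{cc}$: if every fibre $\pi^{-1}([x])$ is a \emph{maximal} level set, then by Remark \ref{maxlevset}\eqref{maxlevsetchar} it equals $p^{-1}(\{l\})$ for some $l\in L$, so $q$ is injective, hence a continuous bijection; since $\pi$ is a quotient map of a compact space and $L$ is Hausdorff, $q$ is automatically a homeomorphism, giving $K/{\sim}\cong L$. The implication $\eqref{cc}\Rightarrow\eqref{33}$ is immediate because $L$ is Hausdorff. The substance is therefore in $\eqref{33}\Rightarrow\eqref{44}$, which I expect to be the main obstacle. Here I would argue as follows: since $K$ is compact and $\pi$ continuous surjective, $K/{\sim}$ is compact, and being Hausdorff by assumption it is a compact Hausdorff space; the continuous functions on it pull back via $\pi$ to a $C^*$-subalgebra $\mathrm{C}(K/{\sim})\hookrightarrow\mathrm{C}(K)$ consisting of functions constant on $\sim$-classes. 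By hypothesis \eqref{inv} every such function $g$ satisfies $\T g=g\circ\varphi=g$ (since $\varphi(x)\sim x$), so $\mathrm{C}(K/{\sim})\subseteq\fix\T=\mathrm{C}(L)$; on the other hand $\mathrm{C}(L)=\fix\T\subseteq\mathrm{C}(K/{\sim})$ is false in general — rather, I must show the reverse inclusion holds, i.e.\ that functions in $\fix\T$ are $\pi$-measurable. This is where the maximality enters: one shows that the $\sim$-saturation of any level set is again a level set, so by hypothesis \eqref{lev} each fibre $\pi^{-1}([x])$, being already $\sim$-saturated and a level set, must coincide with its containing maximal level set $p^{-1}(\{l\})$ once we know the two algebras agree. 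Concretely, I would prove $\mathrm{C}(L)\subseteq\mathrm{C}(K/{\sim})$ by a separation argument: given $l_1\ne l_2$ in $L$ with $p^{-1}(\{l_i\})$ meeting distinct $\sim$-classes is impossible if those fibres were genuinely split, but Hausdorffness of $K/{\sim}$ lets us separate the split pieces by a function in $\mathrm{C}(K/{\sim})\subseteq\fix\T$, contradicting that $p^{-1}(\{l_i\})$ is a level set (Remark \ref{maxlevset}\eqref{maxlevsetchar}). Hence no fibre of $p$ is split by $\sim$, so $\pi^{-1}([x])=p^{-1}(\{l\})$, which is maximal, giving \eqref{44}.

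The delicate point in that last step is to rule out the possibility that $\sim$ is \emph{strictly finer} than the relation induced by $p$ while still having Hausdorff quotient — that is, that $K/{\sim}$ is a strictly larger (but still Hausdorff) factor than $L$. This cannot happen because $L$ is the \emph{maximal trivial} factor: any factor on which the Koopman operator acts trivially (which $K/{\sim}$ is, by \eqref{inv}) factors through $L$; but also $L$ factors through $K/{\sim}$ via $q$, and two compact Hausdorff spaces each a factor of the other through maps composing to the identity on $L$ forces $q$ to be injective. I would phrase this cleanly using the correspondence between closed $C^*$-subalgebras of $\mathrm{C}(K)$ and Hausdorff quotients of $K$ (Gelfand–Naimark), under which $\mathrm{C}(K/{\sim})$ and $\fix\T=\mathrm{C}(L)$ become the \emph{same} subalgebra, whence $K/{\sim}\cong L$ and in particular \eqref{44} holds. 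This subalgebra identification is the cleanest route and I would likely present $\eqref{33}\Rightarrow\eqref{cc}\Rightarrow\eqref{44}$ in that order instead, reserving the geometric separation argument as a remark.
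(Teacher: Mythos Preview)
Your argument is correct, and the separation step you sketch for \ref{33}$\Rightarrow$\ref{44} is exactly the paper's: if some $p$-fibre were split into distinct $\sim$-classes $[y]\ne[z]$, Urysohn on the Hausdorff quotient produces $\tilde f\in\mathrm{C}(K/{\sim})$ with $\tilde f([y])\ne\tilde f([z])$, and $f:=\tilde f\circ\pi$ lies in $\fix\T$ by \ref{inv}, contradicting that $f$ must be constant on the level set $p^{-1}(p(y))$.

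One point to clean up: the inclusion $\fix\T\subseteq\mathrm{C}(K/{\sim})$ that you call ``false in general'' is actually immediate from hypothesis \ref{lev}, since every fixed function is by definition constant on each $\pi^{-1}([x])$ and hence descends continuously to the quotient. Once you note this, your third-paragraph $C^*$-algebra route becomes a genuine one-line alternative to the paper's Urysohn argument: hypotheses \ref{inv} and \ref{lev} together force $\mathrm{C}(K/{\sim})=\fix\T$ as subalgebras of $\mathrm{C}(K)$, so when $K/{\sim}$ is compact Hausdorff it is the Gelfand spectrum of $\fix\T$, i.e.\ $L$. The paper does not take this shortcut; it stays with the explicit Urysohn contradiction throughout.
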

\end{samepage}

\begin{proof}
\ref{44} $\Rightarrow$ \ref{cc}:
By assumption we have $\pi^{-1}([x])=p^{-1}(\{l\})$ for some $l\in L$, where $p\colon K\to L$ is the factor map. Hence $\pi([x])=\pi([y])$ if and only if $p(x)=p(y)$ for $x,y\in K$. By the universal property of the quotient topology there are unique continuous maps $h\colon K/{\sim}\to L$ and $g\colon L\to K/{\sim}$ such that $h\circ\pi=p$ and $g\circ p=\pi$. Then $g=h^{-1}$ since \[g\circ h(\pi(x))=g(p(x))=\pi(x)\] and \[h\circ g(p(x))=h(\pi(x))=p(x)\] for all $x\in K$. Hence $h$ is a homeomorphism between $K/{\sim}$ and $L$. 

\ref{cc} $\Rightarrow$ \ref{33}:
Since $K/{\sim}$ is homeomorphic to the space $L$ it is Hausdorff. 

\ref{33} $\Rightarrow$ \ref{44}: 
It suffices to show that $p^{-1}(p(x))\subseteq \pi^{-1}([x])$ for all $x\in K$, because $p^{-1}(p(x))$ are the maximal level sets of $\fix\T$. Assume there are some $y,z\in K$ such that $y\in p^{-1}(p(z))\backslash \pi^{-1}([z])$. Since $K/{\sim}$ is Hausdorff, $[y]$ and $[z]$ are closed. By Urysohn's lemma, there is some $\tilde{f}\in\mathrm{C}(K/{\sim})$ such that $\tilde{f}([y])\neq\tilde{f}([z])$. Then $f:=\tilde{f}\circ\pi\in\fix\T$, since $f$ is continuous and $x\sim\varphi(x)$ implies \[f(x)=\tilde{f}([x])=\tilde{f}([\varphi(x)])=\T f(x)\] for all $x\in K$. By the universal property of the quotient topology there is some $\hat{f}\in \mathrm{C}(L)$ such that $f=\hat{f}\circ p$. Since $y\in p^{-1}(p(z))$, we obtain \[f(y)=\hat{f}(p(y))=\hat{f}(p(z))=f(z)\]
which contradicts $\tilde{f}([y])\neq\tilde{f}([z])$.
\end{proof}

\begin{remark}
For a topological dynamical system $(K;\varphi)$, the trivial equivalence relations 
\begin{enumerate}
\item $x\sim y$ only for $x=y$ and 
\item
$x\sim y$ for all $x,y\in K$
\end{enumerate}
show that neither $\varphi(x)\sim x$ for $x\in K$ implies that $\pi^{-1}([x])$ is a level set nor the converse implication.
\end{remark}

\section{Equivalence relations induced by generalized orbits}

\label{chap3}
Our goal is to dynamically describe $\fix\T$ for a topological dynamical system $(K;\varphi)$ (cf. Problem \ref{prob}). To do so, we use Lemma \ref{technik} and search for an equivalence relation $\sim$ on $K$ such that $K/{\sim}$ is Hausdorff, $\varphi(x)\sim x$ and $\pi^{-1}([x])$ are level sets for all $x\in K$. 

A first observation is the following. If we take the closed \emph{orbit} \[\orb(x):=\overline{\left\{\varphi^n(x):\,n\in\mathbb{N}_0\right\}}\] for $x\in K$, then $f|_{\orb(x)}$ is constant for all $f\in\fix\T$. Thus, every closed orbit is a level set of $\fix\T$. If $K$ admits a decomposition into mutually disjoint closed orbits, this clearly induces an equivalence relation $\sim$ with $\varphi(x)\sim x$ for all $x\in K$. But the corresponding quotient space may not be Hausdorff as the following example shows.

\begin{example}
\label{kreisvarrot}
Let $K:=D$ be the closed unit disk in $\mathbb{C}$ and \[\varphi(x):=re^{2\pi i(\alpha+r)}\] for $x:=re^{2\pi i\alpha}\in K$ with $r\in [0,1]$, $\alpha\in[0,1)$. Denote by $\mathbb{T}$ the unit circle in $\mathbb{C}$. Then the closed orbits
 \[\orb(x)=\begin{cases}
\left\{re^{2\pi i(\alpha+nr)}:\, n=1,...,q-1\right\}&\text{for $r$ rational, }r=\frac{p}{q}\text{ with }p\text{ and }q\text{ coprime},\\
r\mathbb{T} &\text{for $r$ irrational}
\end{cases}
\]
form a non-trivial decomposition of $K$. However, the fixed space of $\T$ is \[\fix\T=\left\{f\in \mathrm{C}(K):\,f\vert_{c\mathbb{T}}\equiv const.\text{ for all $c\in[0,1]$}\right\},\] so the maximal level sets are the circles $c\mathbb{T}$ for $c\in [0,1]$. This shows that even mutually disjoint closed orbits may induce a quotient space that is not Hausdorff.  
\end{example}

Our strategy to obtain the Hausdorff quotient space corresponding to $\fix\T$ is based on the following characterization.

\begin{remark}
\label{hausd}
A topological space $X$ is Hausdorff if and only if each point is the intersection of its closed neighborhoods, i. e. for all $x\in X$ we have \[\{x\}=\bigcap_{U\in\mathcal{U}(x) \text{ closed}}U\] where $\mathcal{U}(x)$ denotes the neighborhood filter of $x$.
\end{remark}
Moreover, we need the following definition.

\begin{definition}
Let $(K_x)_{x\in K}$ be a covering of $K$ satisfying $x\in K_x$ for all $x\in K$. Define an equivalence relation $\sim$ on $K$ via $x\sim y$ for $x,y\in K$ if there is some $k\in\mathbb{N}$, $x_1,...,x_k\in K$ such that $x_1=x$ and $x_k=y$ and \[K_{x_i}\cap K_{x_{i+1}}\neq\emptyset \text{ for $i=1,...,k-1$}.\] We call $\sim$ the \emph{equivalence relation generated by $(K_x)_{x\in K}$}.
\end{definition}

\begin{remark}
\label{remeqcl}
For the equivalence relation $\sim$ generated by $(K_x)_{x\in K}$ and its canonical projection $\pi: K\to K/{\sim}$, we have $\pi^{-1}([x])=\bigcup_{y\in K, y\sim x}K_y$.
\end{remark}

We now outline our strategy. Starting from a quotient space $K/{\sim_0}$ we successively construct the Hausdorff property by the following steps. We first build the intersection of closed neighborhoods of each equivalence class (cf. Remark \ref{hausd}). The preimages under the canonical projection of these intersections yield a covering of $K$. We obtain a new quotient space $K/{\sim_1}$ taking the equivalence relation generated by this covering. We then repeat the steps above with the new eqivalence relation and so forth. We show that by repeating sufficiently often we arrive at a Hausdorff space.

\begin{remark}
For a similar approach to obtain a \emph{Hausdorffization}, we refer to \cite{Mun}, \cite{kelly} or \cite{osborne}.
\end{remark}

\subsection{Approximating orbits and superorbits}

We apply this strategy to our situation in order to reach the assumptions \ref{inv} and \ref{lev} of Proposition \ref{technik} to characterize the fixed factor of $\T$.

\begin{definition}
\label{aorbsorb}
\begin{enumerate}
\item
\label{aorb1}
We define the \emph{approximating orbit} of $x$ for each $x\in K$ as
\[\aorb (x):=\bigcap\limits_{\substack{U\in \mathcal{U}(x) \text{ closed},\\\varphi(U)\subseteq U }}U\,.\]
\item
\label{sorb1}
Let $\sim$ be the equivalence relation on $K$ generated by $(\aorb(x))_{x\in K}$. The \emph{superorbit} of $x$ is
\[\sorb (x):=\pi^{-1}([x])=\bigcup\limits_{\substack{y\in K,\\ y\sim x}}\aorb(y)\,.\] 
\end{enumerate}
\end{definition}

\begin{proposition}
\label{lemlev}
For each $x\in K$ we have that
\begin{enumerate}
\item\label{inva}
$\varphi(x)\sim x$ and
\item\label{levb}
the superorbit $\sorb(x)$ is a level set of $\fix\T$.
\end{enumerate}
\end{proposition}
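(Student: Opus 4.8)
The plan is to prove the two assertions separately, starting from the definitions of $\aorb(x)$ and $\sorb(x)$ in Definition \ref{aorbsorb}.

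For part \ref{inva}: I would show that $\varphi(x) \in \aorb(x)$, which by the construction of the equivalence relation generated by $(\aorb(y))_{y \in K}$ immediately gives $\varphi(x) \sim x$ (since $x \in \aorb(x) \cap \aorb(x)$ trivially contains the overlap witnessing the relation once we know $\varphi(x)$ lies in a single $\aorb$-set together with $x$). To see $\varphi(x) \in \aorb(x)$, let $U$ be any closed neighborhood of $x$ with $\varphi(U) \subseteq U$; then $\varphi(x) \in \varphi(U) \subseteq U$. Since this holds for every such $U$, we get $\varphi(x) \in \aorb(x)$ by the definition of the approximating orbit as the intersection over all such $U$. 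Hence $x \sim \varphi(x)$.

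For part \ref{levb}: the key observation is that if $f \in \fix\T$, then for any $c \in \mathbb{C}$ the set $U_c := \{y \in K : |f(y) - c| \le \varepsilon\}$ is $\varphi$-invariant in the sense $\varphi(U_c) \subseteq U_c$, because $f(\varphi(y)) = \T f(y) = f(y)$. If $x \in K$ and $c := f(x)$, then for every $\varepsilon > 0$ the set $U_c$ is a closed neighborhood of $x$ with $\varphi(U_c) \subseteq U_c$, so $\aorb(x) \subseteq U_c$; letting $\varepsilon \to 0$ shows $f$ is constant equal to $f(x)$ on $\aorb(x)$. Thus each $\aorb(x)$ is a level set of $\fix\T$. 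To pass from $\aorb(x)$ to $\sorb(x)$, note that by Remark \ref{remeqcl} we have $\sorb(x) = \bigcup_{y \sim x} \aorb(y)$, and if $y \sim x$ there is a chain $x = x_1, \dots, x_k = y$ with $\aorb(x_i) \cap \aorb(x_{i+1}) \ne \emptyset$. Since $f \in \fix\T$ is constant on each $\aorb(x_i)$ and consecutive ones intersect, $f$ takes the same value on all of them, hence the common value $f(x)$ on the whole union $\sorb(x)$. Therefore $\sorb(x)$ is a level set of $\fix\T$.

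I do not expect a serious obstacle here; the only mildly delicate point is making sure that in part \ref{inva} the relation $x \sim \varphi(x)$ really follows from $\varphi(x) \in \aorb(x)$ — this is because $x \in \aorb(x)$ always (as $x$ lies in every closed neighborhood of itself), so $\aorb(x)$ is a single member of the generating family containing both $x$ and $\varphi(x)$, which by definition of the generated equivalence relation (taking the trivial chain $x_1 = x$, $x_2 = \varphi(x)$ — or rather observing both points lie in the one set $\aorb(x)$, so they are related via a length-one overlap condition) yields $x \sim \varphi(x)$. The argument for \ref{levb} is then a routine "constant on overlapping pieces of a chain" propagation.
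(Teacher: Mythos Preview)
Your proposal is correct and follows essentially the same route as the paper: the paper also reduces \ref{levb} to showing each $\aorb(x)$ is a level set via the closed $\varphi$-invariant neighborhood $U=\{y\in K: |f(y)-f(x)|\le\varepsilon\}$, and treats \ref{inva} as immediate. If anything, you are more explicit than the paper, which simply declares \ref{inva} ``clear'' and leaves the chain-propagation from $\aorb$ to $\sorb$ implicit in the phrase ``it suffices to show that $\aorb(x)$ is a level set.''
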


\begin{proof}
The proof of \ref{inva} is clear. 
For \ref{levb}, it suffices to show that $\aorb (x)$ is a level set of $\fix\T$ for all $x\in K$. For $x\in K$, $f\in\fix\T$ and $\varepsilon>0$ define the closed neighborhood ${U:=\{y\in K\colon |f(y)-f(x)|\le\varepsilon\}}$ of $x$ which is $\varphi$-invariant because $f$ is a fixed function of $\T$. This implies $\aorb(x)\subseteq U$ by the definition of the approximating orbit. If $z\in\aorb(x)$, then $z\in U$, hence $|f(z)-f(x)|\le\varepsilon$ for each $\varepsilon > 0$ showing $f(z)=f(x)$.
\end{proof}

We now give examples for approximating orbits, respectively, superorbits and analyze the corresponding quotient space.

\begin{example}
\label{bspx2}
Take $K:=[0,1]$ and $\varphi(x):=x^2$ for $x\in K$. Then \[\aorb (x)=\begin{cases}
\orb(x)& \text{for }x\in [0,1),\\
[0,1] & \text{for }x=1.
\end{cases}\] 
Hence \[\aorb(1)=K\] inducing the trivial decomposition of $K$. The corresponding quotient space is a singleton and therefore Hausdorff, hence corresponds to the fixed factor $L$ by Proposition \ref{technik}. This is in accordance with $\dim\fix\T=1$.
\end{example}

\begin{example}
\label{niedex}
Take the compact space 
\[K:=\left\{(c,0)\colon c\in [0,1] \right\} \,\dot\cup\,\left\{ \left(\tfrac{k}{n},\tfrac{1}{n}\right)\colon n\in \mathbb{N},\, k=0,...,n-1\right\}\subseteq \mathbb{R}^2\] 
and consider on $K$ the continuous dynamics
\[\varphi(x):=
\begin{cases}
x &\text{if }x=\left(c,0\right)\text{ for some }c\in [0,1],\\
\left(\frac{k+1}{n},\frac{1}{n}\right) & \text{if }x=\left(\frac{k}{n},\frac{1}{n}\right)\text{ for some }\,n\in\mathbb{N}\text{ and }k\in\{0,...,n-2\},\\
\left(\frac{n-1}{n},0\right) & \text{if }x=\left(\frac{n-1}{n},\frac{1}{n}\right)\text{ for some }\,n\in\mathbb{N}.
\end{cases}
\]

\newcommand{\Anzahl}{23}

\begin{figure}[H]
\begin{tikzpicture}
\def\num{\Anzahl} 

\def\scal{5.} 
\filldraw (0,\scal) circle (1pt); 

%
\node[below,left] at (0, 0) {0};
\node[left] at (0, \scal) {1};
\node[below,right] at (\scal, 0) {1};
\draw [thick] (0,0)-- (\scal,0);

\def\sx{1.}
\def\bb{1}
\def\vax{1.}
\def\vay{1.}
\def\vbx{1.}
\def\vby{1.}

\pgfmathparse{1.}     \let \mx=\pgfmathresult
\pgfmathparse{\mx}    \let \valx=\pgfmathresult

\pgfmathparse{\mx-1.} \let \valy=\pgfmathresult

\pgfmathparse{1./\valx }        \let \sx=\pgfmathresult

\pgfmathparse{\scal*(\valy    )*\sx } \let \vax=\pgfmathresult
\pgfmathparse{\scal*\sx }             \let \vay=\pgfmathresult
\pgfmathparse{\scal*(\valy    )*\sx } \let \vbx=\pgfmathresult
\pgfmathparse{\scal* 0.       }         \let \vby=\pgfmathresult

\draw[->,dotted,thick] (\vax,\vay) to (\vbx,\vby);

\foreach \mx in {2,...,\num}{
\pgfmathparse{\mx} \let \valx=\pgfmathresult
\pgfmathparse{\mx-1} \let \bb=\pgfmathresult 

\foreach \my in {1,...,\bb}{
\pgfmathparse{\my} \let \valy=\pgfmathresult

\pgfmathparse{1. /\valx }         \let \sx=\pgfmathresult

\pgfmathparse{\scal*(\valy-1.0)*\sx }  \let \vax=\pgfmathresult
\pgfmathparse{\scal*\sx }              \let \vay=\pgfmathresult
\pgfmathparse{\scal*(\valy-0.01)*\sx } \let \vbx=\pgfmathresult
\pgfmathparse{\scal*\sx }              \let \vby=\pgfmathresult

\filldraw (\vbx,\vby) circle (1pt); 
\draw[->,dotted,thick] (\vax,\vay) to (\vbx,\vby);
}

\pgfmathparse{\mx-1.} \let \valy=\pgfmathresult

\pgfmathparse{1. /\valx }        \let \sx=\pgfmathresult

\pgfmathparse{\scal*(\valy    )*\sx } \let \vax=\pgfmathresult
\pgfmathparse{\scal*\sx }             \let \vay=\pgfmathresult
\pgfmathparse{\scal*(\valy    )*\sx } \let \vbx=\pgfmathresult
\pgfmathparse{\scal* 0.       }         \let \vby=\pgfmathresult

\draw[->,dotted,thick] (\vax,\vay) to (\vbx,\vby);
}
\end{tikzpicture}
\end{figure}

\begin{enumerate}
\item
The approximating orbits are
\[\aorb(x)=
\begin{cases}
\{(a,0)\colon a\in [c,1]\} &\text{if }x=(c,0)\text{ for some }c\in [0,1],\\
\mathrm{orb}(x)=\left\{\left(\frac{k+m}{n},\frac{1}{n}\right)\colon m=0,...,n-k-1\right\}\cup\left\{\frac{n-1}{n},0\right\}& \text{if }x=\left(\frac{k}{n},\frac{1}{n}\right)\text{ for some }\,n\in\mathbb{N}\\&\text{and }\,k\in\{0,...,n-1\}.
\end{cases}
\]
\item
If $x=\left(\frac{k}{n},\frac{1}{n}\right)\in K$ for some $n\in \mathbb{N}$ and $k\in\{0,...,n-2\}$, we have \[\aorb (x)\cap\aorb\left(\tfrac{n-1}{n},0\right)= \left(\tfrac{n-1}{n},0\right)\neq\emptyset.\] This implies \[\aorb\left(\tfrac{n-1}{n},0\right)\cap\aorb(x_1,0)=\left\{(c,0)\colon c\in[\tfrac{n-1}{n},1]\}\cap\{(c,0)\colon c\in [x_1,1]\right\}\neq\emptyset\] for all $x_1\in [0,1]$. Hence $x\sim y$ for all $y\in K$ yielding \[\sorb(x)=K.\] 
\end{enumerate}
Therefore, the quotient space induced by the superorbits is a singleton and hence a Hausdorff space, thus corresponds to the one-dimensional fixed space of $\T$. 
\end{example}

While the above superorbits were sufficient to characterize the fixed space of $\T$, the next example reveals that this is not always the case.

\begin{example}
\label{infty0}

Let $K:=[0,\infty]$ be the one-point compactification of $[0,\infty)$ and \[\varphi\colon K\to K,\quad x\mapsto 
\begin{cases}
(x-n)^{2}+n & \text{for }x\in [n,n+1),\,n\in\mathbb{N}_0, \\
\infty & \text{for } x=\infty
\end{cases}\] 
(see Figure \ref{x2unendl}).
\begin{figure}[h]
\caption{$(K;\varphi)$}
\begin{center}
\captionsetup{labelformat=empty}
\label{x2unendl}
\includegraphics[width=0.4\textwidth]{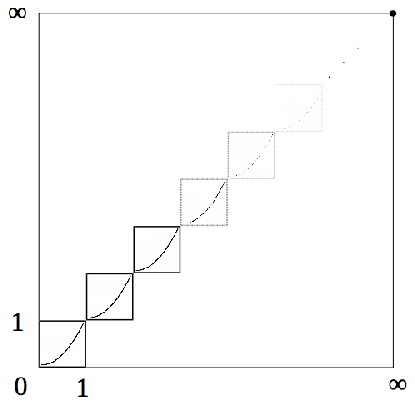}
\end{center}
\end{figure}

Then the approximating orbits are \[\aorb (x)=\begin{cases}
\{0\}&\text{for }x=0,\\
[n-1,n]& \text{for }x=n\in\mathbb{N},\\
\orb(x)=\left\{(x-n)^{2k}-n:\,k\in\mathbb{N}_0\right\}\cup\{n\}&\text{for }x\in (n,n+1),\,n\in\mathbb{N}_0,\\
\{\infty\}& \text{for }x=\infty.
\end{cases}\] 
This yields the superorbits
\[\sorb (x)=\begin{cases}
[0,\infty)& \text{for }0\le x<\infty,\\
\{\infty\} & \text{for }x=\infty.
\end{cases}\] 
However, since $\dim\fix T_{\varphi}=1$, the maximal level set of $\fix\T$ is $[0,\infty]$. Hence the quotient space induced by the superorbits is not Hausdorff.
\end{example}

\subsection{Superorbits of finite degree}
To obtain a Hausdorff quotient space, we iterate the process of building intersections of certain neighborhoods (approximating orbits) and then defining an equivalence relation yielding superorbits. 

\begin{definition}
\label{defendl}
Let $n\in\mathbb{N}_0$ and $x\in K$.
\begin{enumerate}
\item[]\textbf{Base case:}

For $n=0$, define the \emph{approximating orbit of $x$ of degree $0$} as
\[\aorb_0(x):=\aorb(x)\] and the \emph{superorbit  of $x$ of degree $0$} as
\[\sorb_0(x):=\sorb(x)\]
as in Definition \ref{aorbsorb}.
\item[]
\begin{samepage}
\textbf{Successor case:}
\label{succ}
\begin{enumerate}[(i)]
\item Let $n\ge 1$.
The \emph{approximating orbit of $x$ of degree $n$} is
\[\aorb_{n}(x):=
\bigcap\limits_{\substack{U\in \mathcal{U}(x)\text{ open},\\\sorb_{n-1}(U)\subseteq U}}\overline{U}^{\sorb_{n-1}}
\]
with $\sorb_{n-1}(U):=\bigcup_{y\in U}\sorb_{n-1}(y)$ for $U\subseteq K$ and \[\overline{U}^{\sorb_{n-1}}:=\bigcap\limits_{\substack{F\in\mathcal{U}(x)\text{ closed},\\U\subseteq F,\\\sorb_{n-1}(F)\subseteq F}} F,\] called the \emph{$\sorb_{n-1}$-closure of $U$}.
\item 
Let $\sim_{n}$ be the equivalence relation generated by $(\aorb_n(x))_{x\in K}$ with canonical projection $\pi_n\colon K\to K/{\sim_n}$. The \emph{superorbit of $x$ of degree $n$} is
\[\sorb_{n} (x):=\pi_n^{-1}([x])=\bigcup\limits_{\substack{y\in K,\\ y\sim_{n} x}}\aorb_{n}(y).\] 
\end{enumerate}
\end{samepage}
\end{enumerate}
\end{definition}

As before, we check the assumptions \ref{inv} and \ref{lev} in Proposition \ref{technik}.

\begin{proposition}
\label{levelfinite}
For each $n\in\mathbb{N}_0$ and $x\in K$ we have that
\begin{enumerate}
\item
$\varphi(x)\sim_n x$ and
\item
the superorbit $\sorb_n(x)$ of degree $n$ is a level set of $\fix\T$.
\end{enumerate}
\end{proposition}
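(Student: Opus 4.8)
The plan is to prove both assertions simultaneously by induction on $n$. For $n=0$ the statement is exactly Proposition \ref{lemlev}, so the whole work is in the successor step: assuming Proposition \ref{levelfinite} holds for $n-1$, deduce it for $n$. Throughout, I will use that $x\in\aorb_n(x)\subseteq\sorb_n(x)$ (each set entering the intersection defining $\aorb_n(x)$ contains $x$), so in particular all these sets are nonempty.

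For the first assertion I would first show $\varphi(x)\in\aorb_n(x)$. Let $U\in\mathcal{U}(x)$ be open with $\sorb_{n-1}(U)\subseteq U$ (such $U$ exist, e.g.\ $U=K$). Since $x\in U$, the first part of the induction hypothesis gives $\varphi(x)\sim_{n-1}x$, hence $\varphi(x)\in\sorb_{n-1}(x)\subseteq\sorb_{n-1}(U)\subseteq U\subseteq\overline{U}^{\sorb_{n-1}}$. As $U$ ranges over all such sets, we get $\varphi(x)\in\aorb_n(x)$. Since trivially $\varphi(x)\in\aorb_n(\varphi(x))$, the approximating orbits $\aorb_n(x)$ and $\aorb_n(\varphi(x))$ intersect, so $x\sim_n\varphi(x)$ by the definition of the equivalence relation generated by $(\aorb_n(y))_{y\in K}$.

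For the second assertion I would, as in the proof of Proposition \ref{lemlev}, reduce to showing that $\aorb_n(x)$ is a level set of $\fix\T$ for every $x$: then $\sorb_n(x)=\bigcup_{y\sim_n x}\aorb_n(y)$ is a level set too, because any $y\sim_n x$ is joined to $x$ by a chain along which consecutive approximating orbits overlap, so a function $f\in\fix\T$ that is constant on each $\aorb_n(y)$ must take the same value on all of them, hence on $\sorb_n(x)$. To show $\aorb_n(x)$ is a level set, fix $f\in\fix\T$ and $\varepsilon>0$, and consider the open neighborhood $V:=\{y\in K\colon|f(y)-f(x)|<\varepsilon\}$ and the closed neighborhood $W:=\{y\in K\colon|f(y)-f(x)|\le\varepsilon\}$ of $x$. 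By the second part of the induction hypothesis each $\sorb_{n-1}(y)$ is a level set of $\fix\T$, so $f$ is constant on it; this forces $\sorb_{n-1}(V)\subseteq V$ and $\sorb_{n-1}(W)\subseteq W$. Hence $V$ is admissible in the intersection defining $\aorb_n(x)$, and $W$ is a closed $\sorb_{n-1}$-invariant set containing $V$, so $\aorb_n(x)\subseteq\overline{V}^{\sorb_{n-1}}\subseteq W$. Since this holds for every $\varepsilon>0$, we get $f\equiv f(x)$ on $\aorb_n(x)$, and as $f\in\fix\T$ was arbitrary, $\aorb_n(x)$ is a level set.

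I do not expect a genuine obstacle here: the argument is a faithful copy of the base case with ``$\varphi$-invariant closed neighborhood'' replaced by ``$\sorb_{n-1}$-invariant closed neighborhood''. The only points needing a little care are bookkeeping ones — checking that $V$ and $W$ are $\sorb_{n-1}$-invariant using precisely the inductive level-set property, matching $V$ and $W$ with the open/closed sets appearing in Definition \ref{defendl} (noting that $V\subseteq W$ and $x\in V$ make $W$ a closed neighborhood of $x$, as required by the definition of $\overline{V}^{\sorb_{n-1}}$), and the chain-gluing step used to pass from $\aorb_n$ to $\sorb_n$.
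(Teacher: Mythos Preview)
Your proof is correct and follows essentially the same approach as the paper's: the same open set $V=f^{-1}(B_\varepsilon(f(x)))$ and closed set $W=f^{-1}(\overline{B_\varepsilon(f(x))})$ are used, only you verify their $\sorb_{n-1}$-invariance directly from the inductive level-set property while the paper phrases this via factoring $f$ through the quotient $K/{\sim_{n-1}}$. Your argument for part~(a), showing $\varphi(x)\in\aorb_n(x)$ outright, is a minor streamlining of the paper's proof that $\sorb_n(x)$ is $\varphi$-invariant.
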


\begin{proof}
\begin{enumerate}
\item
It suffices to show that $\sorb_n(x)$ is $\varphi$-invariant for all $n\in \mathbb{N}_0$ and $x\in K$. For $n=0$, see Proposition \ref{lemlev}. If $\sorb_n(x)$ is $\varphi$-invariant for all $x\in K$ and $n\in\mathbb{N}_0$, then also $\sorb_{n+1}(x)$ is $\varphi$-invariant by $\sorb_n(y)\subseteq\aorb_{n+1}(y)\subseteq \sorb_{n+1}(x)$ for $x,y\in K$ with $y\in\sorb_{n+1}(x)$.
\item
For $n=0$ see Proposition \ref{lemlev}. For $n\in\mathbb{N}_0$ and $x\in K$ assume that $\sorb_n(x)$ is a level set of $\T$. We show that the assertion holds true for $n +1$. 
As in the base case, consider $U:=\{y\in K:\,|f(y)-f(x)|\le\varepsilon\}=f^{-1}(\overline{B_\varepsilon(f(x))})$ for some $f\in\fix\T$ and $\varepsilon>0$. To prove $\aorb_{n+1}(x)\subseteq U$, we construct some open $V\in \mathcal{U}(x)$ with $\sorb_n (V)\subseteq V$ such that $\overline{V}^{\sorb_n}\subseteq U$.

Define \[V:=f^{-1}(B_\varepsilon(f(x))).\] Then $V\in \mathcal{U}(x)$, $V$ is open and $V\subseteq U$. By the induction hypothesis we have for $x'\in K$ with $x\sim_n x'$ that $f(x)=f(x')$. Hence by the universal property of the quotient topology there is some unique continuous function $\hat{f}\colon K/{\sim_n}\to \mathbb{C}$ such that $f=\hat{f}\circ\pi_n$, i.e., the following diagram commutes.

\hfil
\begin{xy}
  \xymatrix{
      K \ar[r]^f \ar[d]_{\pi_n}   &   \mathbb{C}  \\
      K/{\sim_n} \ar[ru]_{\hat{f}}          
  }
\end{xy}\\\\
\hfil

\noindent

This implies $V=\pi_n^{-1}\left(\hat{f}^{-1}\left(B_\varepsilon\left(f\left(x\right)\right)\right)\right)$, hence

\begin{equation}
\label{sorbinv}
\begin{aligned}
\sorb_n(V)&=\pi_n^{-1}(\pi_n(V))=V.
\end{aligned}
\end{equation}

This yields $\aorb_{n +1}(x)\subseteq \overline{V}^{\sorb_n}$.

We now show that $\overline{V}^{\sorb_n}\subseteq U$. For $f\in\fix\T$ and $C\subseteq \mathbb{C}$, we have $\sorb_n(f^{-1}(C))=f^{-1}(C)$ by the universal property of the quotient topology as above. Moreover, if $B_\varepsilon(f(x))\subseteq C$ then $V\subseteq f^{-1}(C)$ by definition. 

By this,
\begin{eqnarray*}
U&=&f^{-1}(\overline{B_\varepsilon(f(x))})=\bigcap\limits_{\substack{C\subseteq\mathbb{C}\text{ closed},\\B_\varepsilon(f(x))\subseteq C}}f^{-1}(C)\\&\supseteq& \bigcap\limits_{\substack{C\subseteq\mathbb{C},\\f^{-1}(C)\text{ closed},\\f^{-1}(B_\varepsilon(f(x)))\subseteq f^{-1}(C)}}f^{-1}(C)\supseteq\bigcap\limits_{\substack{F\subseteq K\text{ closed},\\\sorb_n(F)\subseteq F,\\ V\subseteq F}}F\\&=&\overline{V}^{\sorb_n}.
\end{eqnarray*}

Hence $\aorb_{n +1}(x)\subseteq \overline{V}^{\sorb_n}\subseteq U$. This implies that for $z\in\aorb_{n +1}(x)$, also $z\in U$. Hence $|f(z)-f(x)|\le\varepsilon$ by definition of $U$. Since $\varepsilon$ is arbitrary, this implies $f(z)=f(x)$.

\end{enumerate}
\end{proof}

We now give a concrete example for these new orbits and analyze the corresponding quotient space.
\begin{example}
\label{infty1}
\begin{enumerate}
\item
Let $K:=[0,\infty]$ be the one-point compactification of $[0,\infty)$ and $\varphi_1\colon K\to K$ with $\varphi_1:=\varphi$ as in Example \ref{infty0}.
As seen before, $\dim\fix T_{\varphi_1}=1$ and  \[\sorb_0 (x)=
\begin{cases}
[0,\infty)& \text{for }0\le x<\infty,\\
\{\infty\} & \text{for }x=\infty.
\end{cases}\] 
Since $[0,\infty)$ is the only $\sorb_0$-invariant open subset of $K$, we have for all $x\in K$ \[\aorb_1(x)=\sorb_1(x)=K.\]

\begin{figure}
\begin{center}
\caption{$(\tilde{K};\tilde{\varphi}_1)$}
\label{phi1schlangekari}
\includegraphics[width=0.4\textwidth]{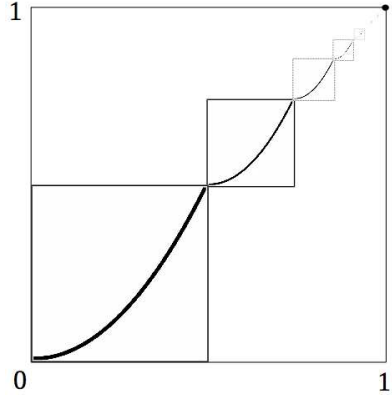}
\end{center}
\end{figure}

\noindent
Next we define an isomorphic system $(\tilde{K};\tilde{\varphi}_1)$ by $\tilde{K}:=[0,1]$ and 
 \[\tilde{\varphi}_1\colon \tilde{K}\to \tilde{K}, \quad\tilde{\varphi}_1:=h\circ\varphi_1\circ h^{-1} \] 
 for a homeomorphism $h\colon K\to \tilde{K}$ with $h(0)=0$ and $h(\infty)=1$ (see Figure \ref{phi1schlangekari}). For this ``compressed'' system we still have $\dim\fix T_{\tilde{\varphi}_1}=1$.

\item
Analogously, we construct a system $(K;\varphi_2)$ on $K=[0,\infty]$ with $\sorb_2(x)=K$ and $\sorb_1(x)\subsetneq K$ for all $x\in K$ via
\[\varphi_2\colon  K\to K,\quad x\mapsto 
\begin{cases}
\tilde{\varphi}_1(x-m)+m & \text{for }x\in [m,m+1),\,m\in\mathbb{N}_0, \\
\infty & \text{for } x=\infty.
\end{cases}
\]
We iterate this procedure of compressing systems and lining up copies of these on $K=[0,\infty]$ (cf. Figure \ref{iter}). By this procedure we obtain systems $(K;\varphi_n)$ with $\sorb_{n-1}(x)\subsetneq K$ and $\sorb_n(x)=K$ for some $n\in\mathbb{N}$ and all $x\in K$. Hence the quotient space $K/{\sim_n}$ is a singleton, thus homeomorphic to the fixed factor $L$ by Proposition \ref{technik}.
\begin{figure}
\begin{center}
\caption{Construction of systems with $\sorb_n(x)=K$ for $x\in K$, $n\in\mathbb{N}$}
\label{iter}
\includegraphics[width=0.7\textwidth]{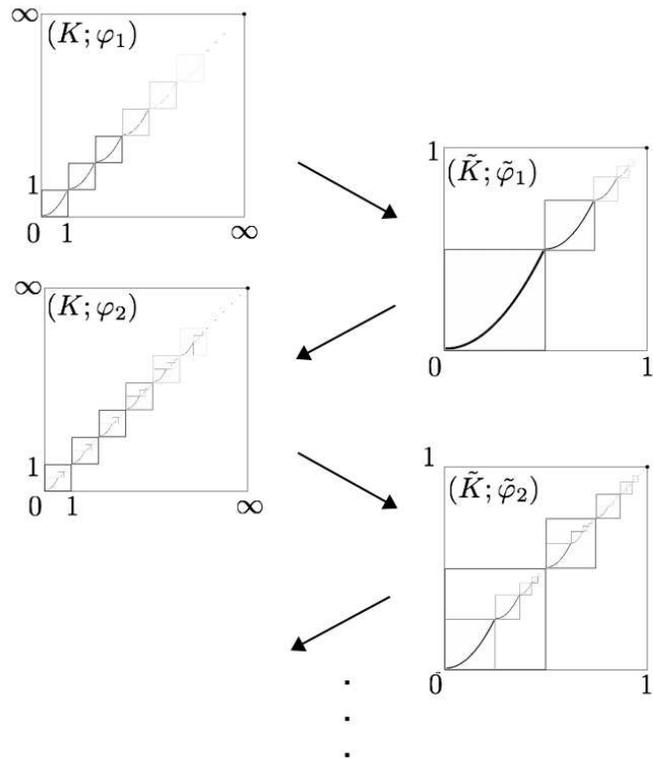}
\end{center}
\end{figure}
\end{enumerate}
\end{example}

This construction leads to an example in which even superorbits of arbitrary degree $n\in\mathbb{N}$ are not sufficient to characterize the fixed space.

\begin{example}
\label{exomega}

Let $K:=[0,\infty]$ and define
\[\varphi(x):=\tilde{\varphi}_k(x)\] for $x\in [k-1,k)$, $k\in\mathbb{N}$, with $\tilde{\varphi}_k$ as in Example \ref{infty1}. 
\begin{figure}[H]
\begin{center}
\caption{$(K;\varphi)$}
\includegraphics[width=0.4\textwidth]{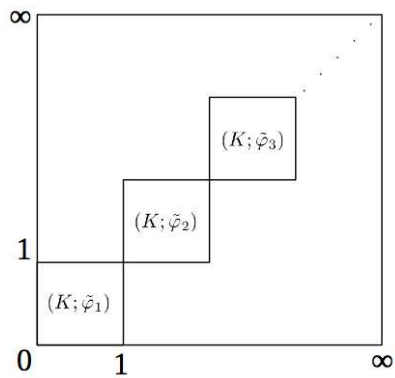}
\end{center}
\label{bspnonfin}
\end{figure}
Then for $n\in\mathbb{N}_0$ the superorbit of degree $n$ is
\[\sorb_{n}(x)=
\begin{cases}
[0,n+1)&\text{for }x\in [0,n+1),\\
\{\infty\}&\text{for }x=\infty
\end{cases}
\]
and \[\sorb_n(x)\subseteq [n+1,\infty)\text{ for }x\in[n+1,\infty).\]
Hence $\sorb_n(x)\neq K$ for all $x\in K$ and $n\in\mathbb{N}_0$ which implies that the corresponding quotient space $K/{\sim_n}$ contains more than one element. Thus, it does not correspond to the fixed factor $L$, which is a singleton since $\dim\fix\T=1$. 
\end{example}

\subsection{Superorbits of non-finite degree}
Because superorbits of arbitrary finite degree do not, in general, yield a Hausdorff quotient space, we introduce superorbits of non-finite degree using the theory of ordinal numbers. We propose the following definition, where $\omega$ denotes the first non-finite ordinal number (see, e.g., \cite[p. 42, Definition 6.1]{dugun}).

\begin{definition}
\label{defunendl}
For any $x\in K$ the \emph{approximating orbit of $x$ of degree $\omega$} is
\[
\aorb_{\omega}(x):=\bigcup\limits_{n\in \mathbb{N}}\sorb_n (x),
\]
and the \emph{superorbit of $x$ of degree $\omega$} is
\[
\sorb_{\omega}(x):=\pi_\omega^{-1}([x])=\bigcup\limits_{\substack{y\in K,\\y\sim_\omega x}}\aorb_\omega (y),
\]
where $\sim_\omega$ is the equivalence relation generated by $(\aorb_\omega(x))_{x\in K}$ with canonical projection $\pi_\omega\colon K\to K/{\sim_\omega}$.
\end{definition}

By Proposition \ref{levelfinite}, the assumptions \ref{inv} and \ref{lev} of Proposition \ref{technik} are satisfied for this equivalence relation.

\begin{proposition}
For each $x\in K$ we have that
\begin{enumerate}
\item
$\varphi(x)\sim_\omega x$ and
\item
the superorbit $\sorb_\omega(x)$ of degree $\omega$ is a level set of $\fix\T$.
\end{enumerate}
\end{proposition}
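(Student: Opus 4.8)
The plan is to imitate the structure of the proof of Proposition~\ref{levelfinite}, treating the ordinal $\omega$ as a "limit step" built from the finite stages. For part (1), it suffices, exactly as before, to show that $\sorb_\omega(x)$ is $\varphi$-invariant for every $x\in K$. First I would observe that $\aorb_\omega(x)=\bigcup_{n\in\mathbb{N}}\sorb_n(x)$ is $\varphi$-invariant because it is an increasing union of the $\varphi$-invariant sets $\sorb_n(x)$ (invariance of each $\sorb_n$ is Proposition~\ref{levelfinite}; the union is increasing because $\sorb_n(x)\subseteq\aorb_{n+1}(x)\subseteq\sorb_{n+1}(x)$). Then, just as in the successor case, if $y\in\sorb_\omega(x)$ one has $\aorb_\omega(y)\subseteq\sorb_\omega(x)$, and since $\varphi(\aorb_\omega(y))\subseteq\aorb_\omega(y)$ we get $\varphi(\sorb_\omega(x))\subseteq\sorb_\omega(x)$, whence $\varphi(x)\sim_\omega x$.

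For part (2), the key reduction is again that it is enough to prove $\aorb_\omega(x)$ is a level set of $\fix\T$ for every $x\in K$, since $\sorb_\omega(x)$ is a union of such sets glued along nonempty intersections, and a value of $f\in\fix\T$ that is constant on each piece and agrees across overlaps is constant on the whole superorbit. But $\aorb_\omega(x)=\bigcup_{n\in\mathbb{N}}\sorb_n(x)$, and each $\sorb_n(x)$ is a level set of $\fix\T$ by Proposition~\ref{levelfinite}. Moreover all these level sets contain the common point $x$, so for a fixed $f\in\fix\T$ the constant value of $f$ on $\sorb_n(x)$ is $f(x)$ for every $n$. Hence $f\equiv f(x)$ on $\bigcup_{n}\sorb_n(x)=\aorb_\omega(x)$, and $\aorb_\omega(x)$ is a level set.

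Concretely the argument I would write is: fix $f\in\fix\T$ and $x\in K$; for each $n$, Proposition~\ref{levelfinite} gives that $f|_{\sorb_n(x)}$ is constant, and since $x\in\sorb_n(x)$ this constant equals $f(x)$; therefore $f(z)=f(x)$ for every $z\in\bigcup_{n\in\mathbb{N}}\sorb_n(x)=\aorb_\omega(x)$, so $\aorb_\omega(x)$ is a level set of $\fix\T$. Then, given $y\sim_\omega x$, there is a chain $x=x_1,\dots,x_k=y$ with $\aorb_\omega(x_i)\cap\aorb_\omega(x_{i+1})\neq\emptyset$; picking a point in each intersection and using that $f$ is constant on each $\aorb_\omega(x_i)$ forces these constants to coincide along the chain, so $f(y)=f(x)$. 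As $y\in\sorb_\omega(x)$ was arbitrary, $f|_{\sorb_\omega(x)}$ is constant, i.e. $\sorb_\omega(x)$ is a level set of $\fix\T$, completing the proof.

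I do not expect a serious obstacle here: unlike the successor case in Proposition~\ref{levelfinite}, where one had to manufacture an invariant open neighborhood $V$ with $\overline{V}^{\sorb_{n-1}}\subseteq U$, the limit step only needs to pass a property (being a level set, being $\varphi$-invariant) through a countable increasing union, which is immediate. The one point that deserves a word is the remark that the constants on the $\sorb_n(x)$ all equal $f(x)$ because $x$ lies in every $\sorb_n(x)$ — this is what lets the nested-union argument go through without any compactness or closedness input, and it is the analogue of the role played by the common base point in the definition of the equivalence relation generated by a covering.
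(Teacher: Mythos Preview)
Your proof is correct and follows exactly the approach the paper intends: the paper does not even give a separate proof environment for this proposition, stating only that ``By Proposition~\ref{levelfinite}, the assumptions \ref{inv} and \ref{lev} of Proposition~\ref{technik} are satisfied for this equivalence relation.'' You have simply unpacked what this sentence means---passing $\varphi$-invariance and the level-set property through the increasing union $\aorb_\omega(x)=\bigcup_n\sorb_n(x)$ and then along the generating chains of $\sim_\omega$---which is precisely the routine verification the paper leaves to the reader.
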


Even superorbits of degree $\omega$ do not, in general, yield a Hausdorff quotient space as the following example shows.

\begin{example}
Let again $K:=[0,\infty]$ be the one-point compactification of $[0,\infty)$ and $\tilde{K}:=[0,1]$. Consider the system $(K;\varphi)$ as in Example \ref{exomega} and the isomorphic system  \[\tilde{\varphi}\colon \tilde{K}\to \tilde{K}, \quad\tilde{\varphi}:=h\circ\varphi\circ h^{-1} \] 
 for a homeomorphism $h\colon K\to \tilde{K}$ with $h(0)=0$ and $h(\infty)=1$. 
Analogously to Example \ref{infty1}, we construct a system $(K;\psi)$ via
\[\psi\colon  K\to K,\quad x\mapsto 
\begin{cases}
\tilde{\varphi}(x-n)+n & \text{for }x\in [n,n+1),\,n\in\mathbb{N}_0, \\
\infty & \text{for } x=\infty,
\end{cases}
\]
by putting copies of the compressed system in a row. Then the fixed factor $L$ is a singleton, while $\sorb_\omega (x)\neq K$ for all $x\in K$, thus the corresponding quotient space $K/{\sim_\omega}$ contains more that one point.
\end{example}

\section{Characterization of the fixed space via transfinite superorbits}


\label{chap4}

To achieve our goal, we need superorbits for arbitrary ordinal numbers. We define the base case, successor case and limit case analogously to Definitions \ref{defendl} and \ref{defunendl}. The class of ordinal numbers is denoted by $\mathrm{Ord}$. 

\begin{definition}
Let $x\in K$.
\begin{enumerate}
\item[]
\textbf{Base case:}
\begin{enumerate}[(i)]
\item
The \emph{approximating orbit of $x$ of degree 0} is
\[\aorb_0 (x):=\bigcap\limits_{\substack{U\in \mathcal{U}(x) \text{ closed},\\\varphi(U)\subseteq U }}U.\] 
\item
Let $\sim_0$ be the equivalence relation on $K$ generated by $(\aorb_0(x))_{x\in K}$ with canonical projection $\pi_0:K\to K/{\sim_0}$. The \emph{superorbit of $x$ of degree 0} is
\[\sorb_0 (x):=\pi^{-1}_0([x])=\bigcup\limits_{\substack{y\in K,\\ y\sim_0 x}}\aorb_0(y).\]

\end{enumerate}
\begin{samepage}
\item[]\textbf{Successor case:}

\begin{enumerate}[(i)]
\item
Let $0\neq\gamma\in\mathrm{Ord}$ a successor. 
Then the \emph{approximating orbit of degree $\gamma$} is
\[\aorb_{\gamma}(x):=
\bigcap\limits_{\substack{U\in \mathcal{U}(x)\text{ open},\\\sorb_{\gamma-1}(U)\subseteq U}}\overline{U}^{\sorb_{\gamma-1}}
\]

where $\sorb_{\gamma-1}(U):=\bigcup_{y\in U}\sorb_{\gamma-1}(y)$ and 
\begin{equation}
\label{sorbclos}
\overline{U}^{\sorb_{\gamma-1}}:=\bigcap\limits_{\substack{F\in\mathcal{U}(x)\text{ closed},\\U\subseteq F,\\\sorb_{\gamma-1}(F)\subseteq F}} F
\end{equation} 

denotes the \emph{$\sorb_{\gamma-1}$-closure of $U$}.

\item 
As before let $\sim_{\gamma}$ be the equivalence relation on $K$ generated by $(\aorb_\gamma(x))_{x\in K}$ and denote the canonical projection by $\pi_\gamma\colon K\to K/{\sim_\gamma}$. Finally, the \emph{superorbit of $x$ of degree $\gamma$} is
\[\sorb_{\gamma} (x):=\pi_\gamma^{-1}([x])=\bigcup\limits_{\substack{y\in K,\\ y\sim_{\gamma} x}}\aorb_{\gamma}(y).\] 
\end{enumerate}

\end{samepage}

\item[]\textbf{Limit case:}

Let $0\neq\gamma\in\mathrm{Ord}$ be a limit ordinal. 
Then the \emph{approximating orbit of $x$ of degree $\gamma$} is
\[
\aorb_{\gamma}(x):=\bigcup\limits_{\beta< \gamma}\sorb_{\beta} (x).
\]
The equivalence relation $\sim_\gamma$ on $K$ and the superorbit $\sorb_\gamma (x)$ of degree $\gamma$ are defined as in the successor case.
\end{enumerate}
\end{definition}

%
%

Before proving that superorbits of arbitrary degree are level sets, we list some basic properties.
\begin{proposition}
\label{propsorb}
Let $\gamma\in\mathrm{Ord}$ and $x\in K$. 
\begin{enumerate}
\item \label{invariance}
For all $\beta\le\gamma$ we have \[\sorb_\beta(x)\subseteq \sorb_{\gamma}(x)\] and \[\aorb_\beta(x)\subseteq \aorb_{\gamma}(x).\]
\item
\label{lemma1}
For $S\subseteq K$ the following are equivalent.
\begin{enumerate}[(i)]
\item \label{1} $\sorb_\gamma(S)\subseteq S$.
\item \label{2} $\sorb_\gamma(S)=S$.
\item \label{4} There is some $M\subseteq K$ such that $S=\bigcup\limits_{y\in M}\sorb_\gamma(y)$.
\end{enumerate}
\end{enumerate}
\end{proposition}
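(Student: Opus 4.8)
The plan is to prove the two claims separately, starting with the monotonicity statement \ref{invariance}, which is the engine for \ref{lemma1}. For \ref{invariance} I would proceed by transfinite induction on $\gamma$, and within that, first establish $\aorb_\beta(x)\subseteq\aorb_\gamma(x)$ for $\beta\le\gamma$, then deduce the inclusion for superorbits. The superorbit inclusion should follow formally once we know the approximating orbits are nested: since $\sorb_\gamma(x)$ is built as the $\sim_\gamma$-equivalence class, i.e.\ a union of $\aorb_\gamma(y)$ over a chain of $\aorb_\gamma$-overlaps, and each such overlap of $\aorb_\beta$-sets persists as an overlap of the larger $\aorb_\gamma$-sets, every $\sim_\beta$-chain is also a $\sim_\gamma$-chain; hence $x\sim_\beta y$ implies $x\sim_\gamma y$, and then $\sorb_\beta(x)=\bigcup_{y\sim_\beta x}\aorb_\beta(y)\subseteq\bigcup_{y\sim_\gamma x}\aorb_\gamma(y)=\sorb_\gamma(x)$. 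So the crux is $\aorb_\beta(x)\subseteq\aorb_\gamma(x)$.

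For the approximating-orbit inclusion I would split into the three cases of the definition of degree $\gamma$. If $\gamma$ is a limit ordinal, $\aorb_\gamma(x)=\bigcup_{\beta<\gamma}\sorb_\beta(x)$ literally contains each $\sorb_\beta(x)\supseteq\aorb_\beta(x)$, so there is nothing to do beyond invoking the induction hypothesis $\aorb_\beta(x)\subseteq\sorb_\beta(x)$ (immediate from the definition of $\sorb_\beta$ as a union including $\aorb_\beta(x)$). If $\gamma=\delta+1$ is a successor, I need $\aorb_\delta(x)\subseteq\aorb_{\delta+1}(x)$: here one compares the two intersections defining them. The key observation is that a $\varphi$-invariant closed neighborhood $U$ of $x$ (one of the sets intersected to form $\aorb_0$, and analogously at successor stages the $\sorb_{\delta-1}$-invariant closed $F$) is in particular $\sorb_\delta$-invariant, because $\varphi$-invariance is the $\gamma=0$ instance and by induction $\sorb_\delta(U)\subseteq U$ whenever $U$ is $\varphi$-invariant and closed — more precisely each closed set appearing in the smaller intersection also appears in (or dominates a set in) the larger intersection, so the larger intersection is over a richer family and hence is smaller or equal... wait, that is the wrong direction, so the actual argument must instead show each closed $F$ admissible for $\aorb_{\delta+1}$ is also admissible for (or contains an admissible set for) a stage that forces $\aorb_\delta(x)\subseteq F$. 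The cleanest route is: show by induction that for any closed $\varphi$-invariant $F$ we have $\sorb_\delta(F)\subseteq F$, so $F$ is $\sorb_\delta$-closed; then any $F$ in the defining intersection of $\aorb_0(x)$ already witnesses $\aorb_{\delta+1}(x)\subseteq F$ after pairing with $U=\mathrm{int}\,F$ (using $\sorb_\delta(\mathrm{int}\,F)\subseteq\mathrm{int}\,F$), whence $\aorb_{\delta+1}(x)\subseteq\bigcap_F F=\aorb_0(x)$, and one bootstraps up successive degrees similarly using $\sorb_\beta$-invariant sets. I expect this comparison of two nested families of admissible neighborhoods to be the main obstacle, chiefly because it requires the auxiliary lemma ``$\sorb_\beta$ preserves $\sorb_{\beta+1}$-invariant (a fortiori $\varphi$-invariant) closed sets'' proved in lockstep with the monotonicity itself, so the transfinite induction carries a conjunction of statements.

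For part \ref{lemma1}, with monotonicity in hand the equivalences are routine. The implication \ref{2}$\Rightarrow$\ref{1} is trivial. For \ref{1}$\Rightarrow$\ref{2}, note $x\in\aorb_\gamma(x)\subseteq\sorb_\gamma(x)$ for every $x$ (from the definitions), so $S\subseteq\sorb_\gamma(S)$ always holds, and together with \ref{1} this gives equality. For \ref{2}$\Rightarrow$\ref{4}, take $M:=S$; then $S=\sorb_\gamma(S)=\bigcup_{y\in S}\sorb_\gamma(y)$. For \ref{4}$\Rightarrow$\ref{1}, it suffices to check $\sorb_\gamma(\sorb_\gamma(y))=\sorb_\gamma(y)$ for each $y$, i.e.\ that $\sorb_\gamma$ is ``idempotent'': this holds because $\sorb_\gamma(y)=\pi_\gamma^{-1}([y])$ is a union of whole $\sim_\gamma$-classes (indeed a single class), so applying $\sorb_\gamma$ to it adds nothing, as $z\in\sorb_\gamma(y)$ forces $[z]=[y]$ and hence $\sorb_\gamma(z)=\sorb_\gamma(y)\subseteq S$. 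Then $\sorb_\gamma(S)=\bigcup_{y\in M}\sorb_\gamma(\sorb_\gamma(y))=\bigcup_{y\in M}\sorb_\gamma(y)=S\subseteq S$. No serious difficulty is anticipated here once the definitions are unwound carefully.
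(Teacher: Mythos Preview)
Your plan for part~\ref{lemma1} is fine and matches the paper's argument almost verbatim: the key fact is that $\sorb_\gamma(z)=\sorb_\gamma(y)$ whenever $z\in\sorb_\gamma(y)$, since $\sorb_\gamma(y)=\pi_\gamma^{-1}([y])$ is a single $\sim_\gamma$-class.

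The problem is in part~\ref{invariance}, specifically the successor case. You try to prove $\aorb_\delta(x)\subseteq\aorb_{\delta+1}(x)$ by comparing the two families of admissible neighborhoods, catch yourself going the wrong way, and then settle on the auxiliary lemma ``for any closed $\varphi$-invariant $F$ we have $\sorb_\delta(F)\subseteq F$''. But this lemma is \emph{false}. In Example~\ref{infty0}, the set $F=[0,1]$ is closed and $\varphi$-invariant (since $\varphi(x)=x^2$ on $[0,1)$ and $\varphi(1)=1$), yet $\sorb_0([0,1])=[0,\infty)\not\subseteq[0,1]$. So the bootstrap you describe cannot work; $\varphi$-invariance of a closed set does not propagate to $\sorb_\delta$-invariance. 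Moreover, even granting the lemma, the chain you wrote concludes $\aorb_{\delta+1}(x)\subseteq\aorb_0(x)$, which is still the wrong direction.

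The paper avoids all of this by reversing your order: it proves the \emph{superorbit} inclusion first, and the single observation needed for the successor step is
\[
\sorb_\gamma(x)\subseteq\aorb_{\gamma+1}(x)\subseteq\sorb_{\gamma+1}(x).
\]
The left inclusion is immediate from the definition: $\aorb_{\gamma+1}(x)$ is an intersection of sets $\overline{U}^{\sorb_\gamma}$, each of which is in turn an intersection of closed $F$ with $x\in F$ and $\sorb_\gamma(F)\subseteq F$; any such $F$ contains $\sorb_\gamma(x)$ because $x\in F$ gives $\sorb_\gamma(x)\subseteq\sorb_\gamma(F)\subseteq F$. That one line replaces your entire successor-case discussion, and the approximating-orbit inclusion then follows (not precedes) the superorbit one.
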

\begin{proof}	
\begin{enumerate}
\item Let $x\in K$. We first show $\sorb_\beta(x)\subseteq \sorb_{\gamma}(x)$ for all $\beta\le\gamma$ using transfinite induction. 
For $\gamma=0$ the statement is trivial. For $\gamma\in\mathrm{Ord}$ assume $\sorb_\beta(x)\subseteq \sorb_{\gamma}(x)$ for all $\beta\le\gamma$. We show $\sorb_\beta(x)\subseteq \sorb_{\gamma+1}(x)$ for all $\beta\le\gamma+1$. This follows immediately from  \[\sorb_\gamma(x)\subseteq \aorb_{\gamma+1}(x)\subseteq \sorb_{\gamma+1}(x).\] Now let $\gamma$ be a limit and $\beta\le\gamma$. Then $\sorb_\beta(x)\subseteq \sorb_{\gamma}(x)$ by the definition of $\sorb_{\gamma}(x)$.\\

Similarly, we show $\aorb_\beta(x)\subseteq \aorb_{\gamma}(x)$ for all $\beta\le\gamma$. 
\item 
The implications $\ref{1}\Leftrightarrow \ref{2}\Rightarrow \ref{4}$ are trivial.
\noindent
%
\noindent
$\ref{4}\Rightarrow \ref{2}$: 
By assumption we have $S\subseteq \bigcup\limits_{y\in M}\sorb_\gamma(y)$ and thus for all $z\in S$ there is some $y\in M$ such that $z\in \sorb_\gamma(y)$. Since $\sim_\gamma$ is an equivalence relation, we have $\sorb_\gamma(z)=\sorb_\gamma(y)$, hence $\underbrace{\bigcup\limits_{y\in S}\sorb_\gamma(y)}_{=\sorb_\gamma(S)}\subseteq \bigcup\limits_{y\in M}\sorb_\gamma(y)$. The inclusion $\bigcup\limits_{y\in M}\sorb_\gamma(y)=S\subseteq \bigcup\limits_{y\in S}\sorb_\gamma(y)=\sorb_\gamma(S)$ clearly holds.
\end{enumerate}
\end{proof}

We now show that the properties \ref{inv} and \ref{lev} in Proposition \ref{technik} hold for all superorbits.
%
%

\begin{proposition}
For each $\gamma\in\mathrm{Ord}$ and $x\in K$ we have that
\begin{enumerate}
\item
$\varphi(x)\sim_\gamma x$ and
\item
the superorbit $\sorb_\gamma(x)$ of degree $\gamma$ is a level set of $\fix\T$.
\end{enumerate}
\end{proposition}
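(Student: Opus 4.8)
The plan is to prove both assertions at once by transfinite induction on $\gamma$, with inductive hypothesis: for every $\beta<\gamma$ and every $y\in K$ the superorbit $\sorb_\beta(y)$ is a $\varphi$-invariant level set of $\fix\T$. The base case $\gamma=0$ is Proposition \ref{lemlev}. For a successor $\gamma$ I would simply rerun the successor step of Proposition \ref{levelfinite} with $n$ replaced by $\gamma-1$, since that argument uses only the $\varphi$-invariance and level-set property of the immediate predecessor $\sorb_{\gamma-1}$. Concretely, for $\varphi$-invariance it suffices, as there, to show that $\sorb_\gamma(x)$ is $\varphi$-invariant, because then $x\in\sorb_\gamma(x)=\pi_\gamma^{-1}([x])$ forces $\varphi(x)\sim_\gamma x$; and if $y\in\sorb_\gamma(x)$ then $y\in\aorb_\gamma(z)$ for some $z\sim_\gamma x$, hence $y\sim_\gamma x$, and $\varphi(y)\in\sorb_{\gamma-1}(y)\subseteq\aorb_\gamma(y)\subseteq\sorb_\gamma(y)=\sorb_\gamma(x)$, using the inductive invariance of $\sorb_{\gamma-1}$ together with $y\in\sorb_{\gamma-1}(y)$. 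For the level-set property, given $f\in\fix\T$ and $\varepsilon>0$, I would set $U:=f^{-1}(\overline{B_\varepsilon(f(x))})$ and $V:=f^{-1}(B_\varepsilon(f(x)))$, factor $f=\hat f\circ\pi_{\gamma-1}$ through the quotient by the inductive level-set property so that $\sorb_{\gamma-1}(V)=V$, and then copy the chain of inclusions from Proposition \ref{levelfinite} to obtain $\aorb_\gamma(x)\subseteq\overline{V}^{\sorb_{\gamma-1}}\subseteq U$, so that $f$ is constant on $\aorb_\gamma(x)$.

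Next I would record a step that is uniform in $\gamma$ and reduces everything to approximating orbits: if $\aorb_\gamma(y)$ is a level set for every $y\in K$, then $\sorb_\gamma(x)$ is a level set for every $x$. Indeed, for $y\sim_\gamma x$ choose a chain $x=x_1,\dots,x_k=y$ with $\aorb_\gamma(x_i)\cap\aorb_\gamma(x_{i+1})\neq\emptyset$; each $f\in\fix\T$ is constant on every $\aorb_\gamma(x_i)$, consecutive sets overlap, so all these constants coincide and $f(x)=f(y)$; since $\sorb_\gamma(x)=\bigcup_{y\sim_\gamma x}\aorb_\gamma(y)$, $f$ is constant on $\sorb_\gamma(x)$. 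Thus it is enough to show that $\aorb_\gamma(x)$ is a level set. The successor case was handled above; in the limit case, $\aorb_\gamma(x)=\bigcup_{\beta<\gamma}\sorb_\beta(x)$ is a union of level sets all containing $x$, so each $f\in\fix\T$ equals $f(x)$ on every $\sorb_\beta(x)$ by the inductive hypothesis, hence on $\aorb_\gamma(x)$. The same decomposition gives $\varphi$-invariance in the limit case, since $\varphi(\sorb_\beta(x))\subseteq\sorb_\beta(x)$ for all $\beta<\gamma$ forces $\varphi(\aorb_\gamma(x))\subseteq\aorb_\gamma(x)$, and then $\varphi$-invariance of $\sorb_\gamma(x)$ and the relation $\varphi(x)\sim_\gamma x$ follow exactly as in the successor case.

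I do not expect a genuinely hard step: the difficulty is purely organizational. The two points to check with care are that the successor step of Proposition \ref{levelfinite} really appeals to the inductive hypothesis only at the immediate predecessor $\gamma-1$ — so that it remains valid at every successor ordinal, not only the finite ones — and that the implication ``$\aorb_\gamma$ is a level set'' $\Rightarrow$ ``$\sorb_\gamma$ is a level set'' is independent of $\gamma$. The only subtlety in the limit case is that $\aorb_\gamma(x)$ need not be closed; but closedness is never used for the level-set property, which requires only that $\aorb_\gamma(x)$ be a union of level sets through the common point $x$.
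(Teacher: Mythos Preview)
Your proposal is correct and follows essentially the same approach as the paper: transfinite induction with base case Proposition~\ref{lemlev}, the successor step reduced to the argument of Proposition~\ref{levelfinite}, and the limit step handled via $\aorb_\gamma(x)=\bigcup_{\beta<\gamma}\sorb_\beta(x)$ together with Proposition~\ref{propsorb}\ref{invariance}. Your write-up simply makes explicit the reduction from $\sorb_\gamma$ to $\aorb_\gamma$ via chains and the $\varphi$-invariance bookkeeping that the paper leaves implicit.
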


\begin{proof}
We use transfinite induction. For the base case $\gamma=0$ see Proposition \ref{lemlev}. If $\gamma\in \mathrm{Ord}$ is a successor, the proof works analogously to Proposition \ref{levelfinite}. Let thus $\gamma\in \mathrm{Ord}$ be a limit. Clearly, $\varphi(x)\sim_\gamma x$ for all $x\in K$. Assume that $\sorb_\beta(x)$ is a level set of $\T$ for all $x\in K$, $\beta\le\gamma$. Then $\sorb_\gamma(x)$ is a level set of $\T$ by definition and Proposition \ref{propsorb}.\ref{invariance}.
\end{proof}

The next proposition is crucial for the proof of our Main Theorem \ref{charfix}. It shows that an approximating orbit corresponds to the intersection of closed neighborhoods in the quotient space. 
%


\begin{proposition}
\label{prop}
If $x\in K$, $\gamma\in \mathrm{Ord}$ a successor and  $\pi_\gamma:K\to K/{\sim_\gamma}$ the canonical projection, then \[\pi_\gamma\left(\aorb_{\gamma+1}(x)\right)= \bigcap\limits_{\substack{U_\sim\in\mathcal{U}([x]_\gamma)\\ \text{closed}}}U_\sim\] and \[\pi_{\gamma}^{-1}\left(\bigcap\limits_{\substack{U_\sim\in\mathcal{U}([x]_\gamma)\\ \text{closed}}}U_\sim\right)=\aorb_{\gamma+1}(x).\]
\end{proposition}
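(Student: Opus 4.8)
The plan is to deduce both identities from the elementary dictionary between the quotient topology on $K/{\sim_\gamma}$ and the $\sorb_\gamma$-invariant subsets of $K$. First I would record that a set $S\subseteq K$ satisfies $\sorb_\gamma(S)\subseteq S$ if and only if $S=\pi_\gamma^{-1}(\pi_\gamma(S))$, i.e.\ $S$ is $\sim_\gamma$-saturated; this is immediate from Proposition \ref{propsorb}.\ref{lemma1} together with $\sorb_\gamma(y)=\pi_\gamma^{-1}([y]_\gamma)$. Since $\pi_\gamma$ is a continuous surjective quotient map, it follows that $\pi_\gamma^{-1}$ maps the closed (resp.\ open) subsets of $K/{\sim_\gamma}$ bijectively onto the closed (resp.\ open) $\sorb_\gamma$-invariant subsets of $K$, with inverse $S\mapsto\pi_\gamma(S)$. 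In particular, $\pi_\gamma^{-1}$ identifies the closed neighborhoods $U_\sim$ of $[x]_\gamma$ with exactly those sets $F\subseteq K$ that are closed, satisfy $\sorb_\gamma(F)\subseteq F$, and contain some open set $U$ with $x\in U$ and $\sorb_\gamma(U)\subseteq U$: for such an $F$ the image $\pi_\gamma(F)$ is a closed neighborhood of $[x]_\gamma$ (it contains the open set $\pi_\gamma(U)\ni[x]_\gamma$) with $\pi_\gamma^{-1}(\pi_\gamma(F))=F$, and conversely any $\pi_\gamma^{-1}(U_\sim)$ of this type arises this way.

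Next I would unfold the definition of $\aorb_{\gamma+1}(x)$. Since $\gamma+1$ is a successor with predecessor $\gamma$, the definition is a double intersection: over the open $\sorb_\gamma$-invariant neighborhoods $U$ of $x$, of the $\sorb_\gamma$-closures $\overline{U}^{\sorb_\gamma}$, each of which is in turn an intersection of closed $\sorb_\gamma$-invariant sets containing $U$ (the side condition $F\in\mathcal U(x)$ in the definition of $\overline{U}^{\sorb_\gamma}$ being automatic once $x\in U\subseteq F$). Flattening this nested intersection, a point lies in $\aorb_{\gamma+1}(x)$ exactly when it lies in every closed $\sorb_\gamma$-invariant set that contains some open $\sorb_\gamma$-invariant neighborhood of $x$; that is,
\[
\aorb_{\gamma+1}(x)=\bigcap\mathcal{F}_x,\qquad \mathcal{F}_x:=\bigl\{F\subseteq K:\ F\text{ closed},\ \sorb_\gamma(F)\subseteq F,\ \exists\,U\text{ open},\ x\in U\subseteq F,\ \sorb_\gamma(U)\subseteq U\bigr\}.
\]
Both $\mathcal{F}_x$ and the family of closed neighborhoods of $[x]_\gamma$ are nonempty, since $K$, resp.\ $K/{\sim_\gamma}$, always qualifies. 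By the first step, $\mathcal{F}_x=\{\pi_\gamma^{-1}(U_\sim):U_\sim\text{ a closed neighborhood of }[x]_\gamma\}$, so, writing $\bigcap_{U_\sim}U_\sim$ for the intersection over all closed $U_\sim\in\mathcal U([x]_\gamma)$ and pulling $\pi_\gamma^{-1}$ out of the intersection,
\[
\pi_\gamma^{-1}\left(\bigcap_{U_\sim}U_\sim\right)=\bigcap_{U_\sim}\pi_\gamma^{-1}(U_\sim)=\bigcap\mathcal{F}_x=\aorb_{\gamma+1}(x),
\]
which is the second asserted identity.

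Finally, for the first identity I would observe that $\aorb_{\gamma+1}(x)=\bigcap\mathcal{F}_x$ is an intersection of $\sorb_\gamma$-invariant sets, hence itself $\sorb_\gamma$-invariant, i.e.\ $\sim_\gamma$-saturated. Applying the surjection $\pi_\gamma$ to the equality just obtained and using $\pi_\gamma(\pi_\gamma^{-1}(A))=A$ gives $\pi_\gamma(\aorb_{\gamma+1}(x))=\bigcap_{U_\sim}U_\sim$, as claimed.

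The essentially content-free parts are the quotient-topology bookkeeping of the first step. The one place that calls for care is the flattening in the second step: one must check that intersecting first over the admissible open $U\ni x$ and then over the closed $\sorb_\gamma$-invariant $F\supseteq U$ produces the same set as intersecting directly over all $F\in\mathcal{F}_x$. This holds because every $F\in\mathcal{F}_x$ already contains, by definition, an admissible $U$, so no set is lost in either direction; I expect this bookkeeping to be the main — and practically the only — subtlety in the proof.
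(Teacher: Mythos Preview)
Your proof is correct. Both you and the paper rely on the same dictionary---$\sorb_\gamma$-invariant subsets of $K$ are exactly the $\pi_\gamma$-saturated sets, so open/closed saturated subsets of $K$ correspond bijectively to open/closed subsets of $K/{\sim_\gamma}$---but you organize the argument more efficiently. The paper proves the two inclusions $\pi_\gamma(\aorb_{\gamma+1}(x))\subseteq\bigcap U_\sim$ and $\pi_\gamma^{-1}\bigl(\bigcap U_\sim\bigr)\subseteq\aorb_{\gamma+1}(x)$ separately: the first by a chain of rewritings of the defining intersection, the second by picking $[z]_\gamma$ in the intersection and checking, for each admissible open $U\ni x$, that $\pi_\gamma(\overline{U}^{\sorb_\gamma})$ is a closed neighborhood of $[x]_\gamma$ (hence contains $[z]_\gamma$). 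Your flattening of the nested intersection into the single family $\mathcal{F}_x$ is the step the paper does not make explicit; once you observe that $\mathcal{F}_x$ coincides exactly with $\{\pi_\gamma^{-1}(U_\sim):U_\sim\text{ closed neighborhood of }[x]_\gamma\}$, both identities drop out at once and the element chasing disappears. The gain is concision; the paper's version, by working with $\overline{U}^{\sorb_\gamma}$ directly, stays closer to the original definitions and makes the role of each admissible $U$ more visible.
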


\begin{proof}
Since $\pi_\gamma$ is surjective, it suffices to show the following inclusions:

\begin{enumerate}
\item \label{bild}
$\pi_\gamma(\aorb_{\gamma+1}(x))\subseteq\bigcap\limits_{\substack{U_\sim\in\mathcal{U}([x]_\gamma)\\ \text{ closed}}}U_\sim$,
\item \label{urbild}
$\pi_{\gamma}^{-1}\left(\bigcap\limits_{\substack{U_\sim\in\mathcal{U}([x]_\gamma) \\ \text{closed}}}U_\sim\right)\subseteq\aorb_{\gamma+1}(x)$.
\end{enumerate}

By the definition of an approximating orbit and the results obtained in Proposition \ref{propsorb} and Lemma \ref{propsorb} \ref{lemma1}, we deduce
\begin{eqnarray*}
\pi_\gamma\left(\aorb_{\gamma+1}(x)\right)
&{\overset{\ref{propsorb}\, \ref{lemma1}}{=}}&\pi_\gamma\left(\bigcap\limits_{\substack{U\in\mathcal{U}(x) \text{ open},\\\sorb_{\gamma}(U)= U}}\overline{U}^{\sorb_\gamma}\right)\\
&\underset{\sorb_\gamma(\overline{U}^{\sorb_\gamma})=\overline{U}^{\sorb_\gamma}}{\overset{\ref{remeqcl}}{=}}&\pi_\gamma\left(\bigcap\limits_{\substack{U\in\mathcal{U}(x) \text{ open},\\\sorb_{\gamma}(U)= U}}\pi^{-1}_{\gamma}\left(\pi_{\gamma}(\overline{U}^{\sorb_\gamma})\right)\right)\\
&=&\bigcap\limits_{\substack{U\in\mathcal{U}(x) \text{ open},\\\pi^{-1}_{\gamma}(\pi_{\gamma}(U))= U}}\pi_{\gamma}(\overline{U}^{\sorb_\gamma})\\
&\subseteq & \bigcap\limits_{U_\sim\in\mathcal{U}([x]_\gamma) \text{ closed}}U_\sim
\end{eqnarray*}
which proves \ref{bild}.

To show \ref{urbild}, let $[z]_\gamma\in\bigcap\limits_{U_\sim\in\mathcal{U}([x]_\gamma) \text{ closed}}U_\sim$. Since $\sorb_\gamma(z)=\pi_\gamma^{-1}([z]_\gamma)$, we show $\sorb_\gamma(z)\subseteq\aorb_{\gamma+1}(x)$. By the definition of $\aorb_{\gamma+1}(x)$ it suffices to show $\sorb_\gamma(z)\subseteq \overline{U}^{\sorb_\gamma}$ for $U\in\mathcal{U}(x)$ open with $\sorb_\gamma (U)\subseteq U$.
%
%
%
%
%
%
%

We now move to the quotient space. Define \[V_\sim:=\pi_\gamma(\overline{U}^{\sorb_\gamma}):=\left\{\pi_\gamma(y):\,y\in \overline{U}^{\sorb_\gamma}\right\}.\] To show $V_\sim\in\mathcal{U}([x]_\gamma)$, we check the following.
\begin{enumerate}
\item[(i)] $[x]_\gamma\in V_\sim$ and 
\item[(ii)] there is some subset $W_\sim\subseteq V_\sim$ which is open in $K/{\sim_\gamma}$ and $[x]_\gamma\in W_\sim$.
\end{enumerate}
We have $\pi_\gamma^{-1}([x]_\gamma)=\sorb_\gamma(x)\subseteq \overline{U}^{\sorb_\gamma}$ since $x\in \overline{U}^{\sorb_\gamma}$ and $\sorb_\gamma (\overline{U}^{\sorb_\gamma})\subseteq \overline{U}^{\sorb_\gamma}$. Therefore, $\left\{[x]_\gamma\right\}=\pi_\gamma(\pi_\gamma^{-1}([x]_\gamma))\subseteq\pi_\gamma(\overline{U}^{\sorb_\gamma})=V_\sim$, hence $[x]_\gamma\in V_\sim$ showing $(i)$.

Define \[W_\sim:=\pi_\gamma(U).\] Then $U\subseteq \overline{U}^{\sorb_\gamma}$ implies $W_\sim=\pi_\gamma(U)\subseteq \pi_\gamma(\overline{U}^{\sorb_\gamma})= V_\sim$. 

Furthermore, $W_\sim$ is open with respect to the quotient topology since $\pi_\gamma^{-1}(W_\sim)=\pi_\gamma^{-1}(\pi_\gamma(U))
=\sorb_\gamma (U)\overset{\ref{propsorb}\,\ref{lemma1}}{=}U$ is open. Clearly, $[x]_\gamma\in W_\sim$. This shows $(ii)$.
\\\\
Analogously, we see $\pi_\gamma^{-1}(V_\sim)=\overline{U}^{\sorb_\gamma}$. Hence $V_\sim$ is closed with respect to the quotient topology.
\\\\
Summarizing, we obtain $V_\sim\in\mathcal{U}([x]_\gamma)$ and $V_\sim$ closed, hence $[z]_\gamma\in V_\sim$ by assumption. This implies $\sorb_\gamma(z)=\pi_\gamma^{-1}([z]_\gamma)\subseteq \pi_\gamma^{-1}(V_\sim)=\overline{U}^{\sorb_\gamma}$ which proves assertion \ref{urbild}.

\end{proof}

To obtain a Hausdorff quotient space corresponding to the fixed factor $L$, the process of building superorbits must become stationary.

\begin{theorem}
\label{stationary}
There is some ordinal number $\gamma\in\mathrm{Ord}$ such that $\sorb_\gamma (x)=\sorb_{\gamma+1}(x)$ for all $x\in K$. 
\end{theorem}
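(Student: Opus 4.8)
The plan is to show the transfinite sequence of superorbit decompositions stabilizes by a cardinality argument: the decompositions form a monotone (refining-reverse) transfinite chain inside the power set of a fixed set, so it cannot strictly decrease forever.

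\medskip

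\textbf{Monotonicity.} First I would record the key monotonicity fact. By Proposition \ref{propsorb}\ref{invariance}, for $\beta\le\gamma$ we have $\sorb_\beta(x)\subseteq\sorb_\gamma(x)$ for every $x\in K$. Hence the partition $\mathcal{P}_\gamma:=\{\sorb_\gamma(x):x\in K\}$ of $K$ gets coarser as $\gamma$ increases: each $\sorb_\gamma(x)$ is a union of sets $\sorb_\beta(y)$. Equivalently the equivalence relations are nested, $\sim_\beta\ \subseteq\ \sim_\gamma$ as subsets of $K\times K$. So the map $\gamma\mapsto\ \sim_\gamma$ is a (weakly) increasing transfinite chain of subsets of the fixed set $K\times K$.

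\medskip

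\textbf{Stabilization.} A weakly increasing transfinite chain of subsets of a fixed set must be eventually constant: if it were strictly increasing at every successor step, we could inject $\mathrm{Ord}$ (or at least every ordinal of cardinality $\le |K\times K|$, which already gives a proper-class-sized, hence impossible, injection into $\mathcal P(K\times K)$) into $\mathcal P(K\times K)$ by sending $\gamma$ to $\sim_{\gamma+1}\setminus\sim_\gamma\neq\emptyset$ — these differences are pairwise disjoint nonempty subsets of $K\times K$, contradicting the fact that $K\times K$ has only a set of pairwise disjoint nonempty subsets. Concretely: pick a cardinal $\kappa>|\mathcal P(K\times K)|$; the sets $\sim_{\gamma+1}\setminus\sim_\gamma$ for $\gamma<\kappa$, if all nonempty, would be $\kappa$ many pairwise disjoint nonempty subsets of $K\times K$, impossible. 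So there is a successor ordinal $\gamma$ with $\sim_\gamma\ =\ \sim_{\gamma+1}$. Since $\sorb_\gamma(x)=\pi_\gamma^{-1}([x]_\gamma)$ is exactly the $\sim_\gamma$-class of $x$, the equality of relations gives $\sorb_\gamma(x)=\sorb_{\gamma+1}(x)$ for all $x\in K$.

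\medskip

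\textbf{Where the care is needed.} The only subtle point is making the cardinality bookkeeping precise: one must phrase it so as to avoid quantifying over the proper class $\mathrm{Ord}$ as if it were a set. The clean formulation is: fix any cardinal $\kappa$ with $\kappa>2^{|K\times K|}$; the assignment $\gamma\mapsto\ \sim_\gamma$ restricted to $\gamma<\kappa$ cannot be strictly increasing at all successors (else we get $\kappa$ pairwise disjoint nonempty subsets of $K\times K$, or directly an injection of $\kappa$ into $\mathcal P(K\times K)$), so some successor step $\gamma<\kappa$ has $\sim_\gamma\ =\ \sim_{\gamma+1}$. Everything else — translating between nested equivalence relations, the induced partitions, and the equalities $\sorb_\gamma(x)=\sorb_{\gamma+1}(x)$ — is routine given Proposition \ref{propsorb}.
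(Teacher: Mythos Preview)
Your proof is correct and follows essentially the same approach as the paper: a cardinality argument showing that the monotone transfinite chain of superorbit data must stabilize, using Proposition~\ref{propsorb}\ref{invariance} for monotonicity. You package the argument via the equivalence relations $\sim_\gamma\subseteq K\times K$ whereas the paper bounds the number of distinct superorbits inside $\mathfrak{P}(K)$, but the idea is the same.
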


\begin{proof}
For all $\beta\in \mathrm{Ord}$ we have $|\left\{\sorb_\alpha(x):\,x\in K,\alpha\le \beta\right\}|\le|\mathfrak{P}(K)|$ for the power set $\mathfrak{P}(K)$ of $K$. Moreover, by Proposition \ref{propsorb}.\ref{invariance}, if $\sorb_\alpha(x)=\sorb_{\alpha+1}(x)$ for some $\alpha\in \mathrm{Ord}$ and some $x\in K$, then $\sorb_\alpha(x)=\sorb_{\alpha'}(x)$ for all $\alpha'\in \mathrm{Ord}$ with $\alpha\le\alpha'$. This implies for $\gamma\in \mathrm{Ord}$ with $|\gamma|=|\mathfrak{P}(K)|$ that $\sorb_\gamma(x)=\sorb_{\gamma+1}(x)$ for all $x\in K$.
\end{proof}

We can now describe the fixed space of $\T$ in terms of $(K;\varphi)$.

\begin{maintheorem}
\label{charfix}
Let $\fix\T\cong C(L)$ for a compact space $L$. Then $L$ is homeomorphic to $ K/{\sim_\alpha}$ for some $\alpha\in\mathrm{Ord}$. 
\end{maintheorem}

\begin{proof}
Choose $\alpha\in\mathrm{Ord}$ such that $\sorb_\alpha(x)=\sorb_{\alpha+1}(x)$ for all $x\in K$ (see Theorem \ref{stationary}) and assume, without loss of generality, that $\alpha$ is a successor. By Proposition \ref{levelsets} and Theorem \ref{technik} it remains to show that $K/{\sim_\alpha}$ is Hausdorff, i.e., \[\left\{[x]_\alpha\right\}=\bigcap\limits_{U\in\mathcal{U}([x]_\alpha)\text{ closed}}U\] for all $x\in K$ (see Lemma \ref{hausd}).
To do so, let $[z]_\alpha\in\bigcap\limits_{U\in\mathcal{U}([x]_\alpha)\text{ closed}}U$. An immediate consequence of Proposition \ref{prop} is  \[\sorb_\alpha(z)\subseteq \aorb_{\alpha+1}(x).\] Hence we have \[\sorb_\alpha(z)\subseteq\aorb_{\alpha+1}(x)\subseteq\sorb_{\alpha+1}(x)=\sorb_\alpha(x).\] This implies $\sorb_\alpha(z)\subseteq\sorb_\alpha(x)$ and hence $\sorb_\alpha(z)=\sorb_\alpha(x)$ since $\sim_\alpha$ is an equivalence relation. Therefore, also $[z]_\alpha=\pi(\sorb_\alpha(z))=\pi(\sorb_\alpha(x))=[x]_\alpha$ which shows that $K/{\sim_\alpha}$ is Hausdorff.
\end{proof}

From this, we obtain a characterization of a one-dimensional fixed space of $\T$.

\begin{definition}
We call a topological dynamical system $(K;\varphi)$ \emph{topologically ergodic} if there is some $x\in K$ and $\gamma\in \mathrm{Ord}$ such that \[K=\sorb_\gamma (x).\]
\end{definition}

\begin{remark}
\begin{enumerate}
\item
In particular, topological ergodicity is a global property depending on the dynamical behavior of $\phi$ on the entire space $K$. 
\item
Compare, e.g., \cite[p. 2144]{peterart}, \cite[p. 151]{peterbuch} or \cite[p. 31]{vries} for a different meaning of this notion.
\end{enumerate}
\end{remark}

\begin{maintheorem}
\label{charonedim}
The fixed space of $\T$ is one-dimensional if and only if $(K;\varphi)$ is topologically ergodic.
\end{maintheorem}

\begin{remark}
In continuous-time dynamical systems there is a transfinite construction yielding so-called \emph{prolongations} (cf., e.g., \cite{auslprol}, \cite[chapters II.4 and VII]{bhatia} or \cite{ura}). These are -- if adapted to the discrete-time setting -- different from approximating orbits and superorbits as can be seen from Example \ref{niedex}. Here we have for the first prolongation \[\mathcal{D}_1(x):=\aorb_0(x)=
\begin{cases}
\{(c,0)\colon c\in [a,1]\} &\text{if }x=(a,0)\text{ for some }a\in [0,1],\\
\orb(x)&\text{elsewhere}
\end{cases}\]
and for the second prolongation 
\[\mathcal{D}_2(x):=\bigcap\limits_{U\in\mathcal{U}(x)}\overline{\bigcup\limits_{n\in\mathbb{N}}\mathcal{D}_1^n(U)}=\bigcap\limits_{U\in \mathcal{U}(x)}\mathcal{D}_1(U)=\mathcal{D}_1(x)\] for all $x\in K$ because $\mathcal{D}_1(\mathcal{D}_1(U))=\mathcal{D}_1(U)$ and $\mathcal{D}_1(U)$ is closed for all $U\in \mathcal{U}(x)$. This implies that all prolongations of higher degree are equal to $\mathcal{D}_1(x)$ for all $x\in K$, while $\dim\fix\T=1$. Hence the decomposition induced by $\fix\T$ is not obtained by the prolongations.

Also \emph{chain prolongations} (see, e.g., \cite{ding}) are in general different from our superorbits.
\end{remark}

\section{Lyapunov stability of higher order}
It is an interesting problem to decompose a topological dynamical system $(K;\varphi)$ into disjoint, $\varphi$-invariant and ``stable'' sets. We now suggest a hierarchy of stability notions which are closely linked to the fixed space $\fix\T$ of a Koopman operator.

We first recall the following standard definition. 

\begin{definition}
A set $M\subseteq K$ is called \emph{Lyapunov stable} if it is the intersection of its $\varphi$-invariant neighborhoods, i.e., \[M=\bigcap_{\substack{U\in\mathcal{U}(M),\\ \varphi(U)\subseteq U}}U.\]
\end{definition}

The maximal level sets of $\fix\T$ are Lyapunov stable and yield a decomposition of $K$. However, it may happen that there exist finer decompositions into Lyapunov stable sets as the following example shows.

\begin{example}
\begin{samepage}
Take \[K:=\left\{\left(\tfrac{1}{n},c\right)\colon n\in\mathbb{N},\, c \in [0,1] \right\}\cup\left\{\left(0,c\right)\colon c\in [0,1]\right\}\] with the subspace topology of $\mathbb{R}^2$ and the dynamics
\[\varphi(x)=\begin{cases}
\left(\tfrac{1}{n},n\left(c-\tfrac{m}{n}\right)^2+\tfrac{m}{n}\right)&\text{for } x=\left(\tfrac{1}{n},c\right) \text{ with } c\in[\tfrac{m}{n},\tfrac{m+1}{n}]\text{ for some } n\in\mathbb{N},\,m\in\mathbb{N}_0,\\
\left(0,c\right) &\text{for }x=\left(0,c\right) \text{ with } c \in [0,1]
\end{cases}\]
for $x\in K$. 
\end{samepage}

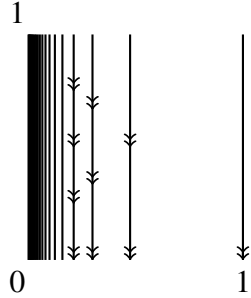
\begin{figure}[H]
\begin{tikzpicture}
\node[below] at (0, 0) {0};
\node[above] at (0, 3) {1};
\node[below] at (3, 0) {1};
\draw[thick,->>] (3,0.1)-- (3,0);
\draw[thick,->>] (1.5,1.51)-- (1.5,1.5);
\draw[thick,->>] (1.5,0.01)-- (1.5,0);
\draw[thick,->>] (1,0.01)-- (1,0);
\draw[thick,->>] (1,1.01)-- (1,1);
\draw[thick,->>] (1,2.01)-- (1,2);
\draw[thick,->>] (3/4,0.01)-- (3/4,0);
\draw[thick,->>] (3/4,3.01/4)-- (3/4,1.5/2);
\draw[thick,->>] (3/4,1.51)-- (3/4,1.5);
\draw[thick,->>] (3/4,9.01/4)-- (3/4,9/4);
\foreach \x in {1,...,20}
\draw [thick] (3/\x,3)-- (3/\x,0);
\end{tikzpicture}
\caption{$(K;\varphi)$}
\end{figure}

Here, the decomposition of $K$ induced by the fixed space $\fix\T$ is
\[K=\dot{\bigcup_{n\in\mathbb{N}}}\left\{\left(\tfrac{1}{n},c\right)\colon c \in [0,1] \right\}\,\dot{\cup}\,\left\{\left(0,c\right)\colon c \in [0,1]\right\}\]
while a finer decomposition into Lyapunov stable sets is given by
\[K=\dot{\bigcup_{n\in\mathbb{N}}}\left\{\left(\tfrac{1}{n},c\right)\colon c \in [0,1] \right\}\,\dot{\cup}\,\dot{\bigcup}_{c \in [0,1] }\left\{(0,c)\right\}.\]

\end{example}

To explain the difference between these decompositions, we use our concept of superorbits from Chapters \ref{chap3} and \ref{chap4} to generalize Lyapunov stability to a hierarchy of stability notions. This can produce decompositions of $K$ which are coarser than a decomposition into Lyapunov stable sets but finer than the decomposition induced by $\fix\T$.

\begin{definition}
\begin{enumerate}
\item
A set $M\subseteq K$ is called \emph{Lyapunov stable of degree $\alpha$} for some $\alpha\in\mathrm{Ord}$ if \[M=\bigcap\limits_{\substack{U\in\mathcal{U}(M)\text{ open},\\ \sorb_\alpha(U)\subseteq U}}\overline{U}^{\sorb_\alpha}.\]
\item 
A set $M\subseteq K$ is called \emph{absolutely Lyapunov stable} if $M$ is Lyapunov stable of degree $\alpha$ for all $\alpha\in \mathrm{Ord}$.
\end{enumerate}
\end{definition}

\begin{remark}
\label{liapincl}
If a set $M$ is Lyapunov stable of degree $\alpha$, then it is Lyapunov stable of degree $\beta$ for all $\beta \le \alpha$.
\end{remark}

\begin{samepage}
\begin{lemma}
\label{liapthm}
Let $M\subseteq K$ and $\alpha\in\mathrm{Ord}$.
\begin{enumerate}
\item
For $M$ Lyapunov stable and $x\in M$, also $\aorb_0(x)\subseteq M$ and $\sorb_0(x)\subseteq M$.
\item
For $M$ Lyapunov stable of degree $\alpha$ and $x\in M$, also $\aorb_{\alpha+1}(x)\subseteq M$ and $\sorb_{\alpha+1}(x)\subseteq M$.
\end{enumerate}
\end{lemma}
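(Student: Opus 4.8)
The plan is to prove both statements by peeling off the definitions and using that a Lyapunov stable (of degree $\alpha$) set $M$ is a neighborhood of each of its points together with suitable invariance. I would treat the two parts uniformly, noting that part (1) is the special case ``$\alpha = 0$ with $\sorb_{-1}$ replaced by $\varphi$'' of part (2), but write part (1) separately for clarity since the base case uses plain $\varphi$-invariant closed neighborhoods instead of $\sorb_\alpha$-closures.

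For part (1): fix $x \in M$ with $M$ Lyapunov stable. First I would observe that since $M = \bigcap_{U \in \mathcal{U}(M),\, \varphi(U) \subseteq U} U$, for every $\varphi$-invariant neighborhood $U$ of $M$ we have $x \in U$, so $U$ is in particular a neighborhood of $x$. Taking closures, $M$ is also the intersection of the \emph{closed} $\varphi$-invariant neighborhoods of $M$ (the closure of a $\varphi$-invariant neighborhood is again a $\varphi$-invariant neighborhood since $\varphi$ is continuous and $\varphi(\overline{U}) \subseteq \overline{\varphi(U)} \subseteq \overline{U}$). Hence $M = \bigcap_{F \in \mathcal{U}(M)\text{ closed},\, \varphi(F) \subseteq F} F$, and each such $F$ is a closed $\varphi$-invariant neighborhood of $x$, so by Definition \ref{aorbsorb}\ref{aorb1} we get $\aorb_0(x) \subseteq F$ for all such $F$; intersecting over $F$ gives $\aorb_0(x) \subseteq M$. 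For the superorbit I would then invoke Proposition \ref{propsorb}\ref{lemma1} (applied with $\gamma = 0$): since $M$ is a union of level sets... more precisely, since $M$ is $\varphi$-invariant and closed (hence $\sorb_0$-invariant — this is the point that needs the observation that $\aorb_0(y) \subseteq M$ for every $y \in M$, which the previous step gives for arbitrary $y \in M$), Remark \ref{remeqcl} and the definition of $\sorb_0$ yield $\sorb_0(x) = \bigcup_{y \sim_0 x} \aorb_0(y) \subseteq M$. The cleanest formulation: from $\aorb_0(y) \subseteq M$ for all $y \in M$ one gets that $M$ is a union of $\aorb_0$'s meeting only inside $M$, so the equivalence class $[x]_0$ stays inside $M$, whence $\sorb_0(x) \subseteq M$.

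For part (2): the argument is the same with $\varphi$-invariance replaced by $\sorb_\alpha$-invariance and closures replaced by $\sorb_\alpha$-closures. Fix $x \in M$ with $M$ Lyapunov stable of degree $\alpha$, so $M = \bigcap_{U \in \mathcal{U}(M)\text{ open},\, \sorb_\alpha(U) \subseteq U} \overline{U}^{\sorb_\alpha}$. For each such open $U$, we have $x \in U$, so $U$ is an open neighborhood of $x$ with $\sorb_\alpha(U) \subseteq U$; by Definition \ref{defendl}/the successor case of the transfinite definition, $\aorb_{\alpha+1}(x) \subseteq \overline{U}^{\sorb_\alpha}$. Intersecting over all such $U$ gives $\aorb_{\alpha+1}(x) \subseteq M$. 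Then, exactly as above, this holds for every $y \in M$, so $M$ is $\sorb_{\alpha+1}$-invariant (indeed $\sorb_\alpha(y) \subseteq \aorb_{\alpha+1}(y) \subseteq M$ and the $\aorb_{\alpha+1}$'s glue only within $M$), hence $\sorb_{\alpha+1}(x) = \pi_{\alpha+1}^{-1}([x]_{\alpha+1}) = \bigcup_{y \sim_{\alpha+1} x} \aorb_{\alpha+1}(y) \subseteq M$ by Remark \ref{remeqcl} and Proposition \ref{propsorb}\ref{lemma1}.

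The main obstacle I anticipate is the passage from ``$\aorb_{\alpha+1}(x) \subseteq M$ for all $x \in M$'' to ``$\sorb_{\alpha+1}(x) \subseteq M$'': one must check that the chains $x_1 \sim_{\alpha+1} \cdots \sim_{\alpha+1} x_k$ witnessing $\sim_{\alpha+1}$ cannot escape $M$, i.e.\ that if $x_1 \in M$ and $\aorb_{\alpha+1}(x_i) \cap \aorb_{\alpha+1}(x_{i+1}) \neq \emptyset$ then $x_{i+1} \in M$. This follows because $\aorb_{\alpha+1}(x_i) \subseteq M$ (by induction along the chain) and $x_{i+1} \in \aorb_{\alpha+1}(x_{i+1})$, and the intersection being nonempty together with $\aorb_{\alpha+1}(x_i)\subseteq M$ forces... actually one needs that $M$ being a union of full $\sorb_\alpha$-classes makes the $\aorb_{\alpha+1}(x_{i+1})$ entirely contained in $M$ as soon as it meets $M$: since $\aorb_{\alpha+1}(x_{i+1}) \subseteq \overline{U}^{\sorb_\alpha}$ for suitable $U$ and... the clean route is to note $\aorb_{\alpha+1}(x_{i+1})$ is $\sorb_\alpha$-invariant and meets the $\sorb_\alpha$-invariant set $M$ in a point whose whole $\sorb_\alpha$-class lies in both; combined with $\aorb_{\alpha+1}(x_{i+1}) \subseteq M$ whenever $x_{i+1} \in M$ — which is what we want to prove — one should instead argue directly: $\aorb_{\alpha+1}(x_i) \subseteq M$ and the nonempty intersection gives a point $w \in M$; but $M$ Lyapunov stable of degree $\alpha$ and $w \in M$ does not immediately give $x_{i+1} \in M$. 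The honest fix is to invoke Proposition \ref{propsorb}\ref{lemma1} globally: $M = \bigcap \overline{U}^{\sorb_\alpha}$ shows $M$ is $\sorb_\alpha$-invariant, and then one shows $\aorb_{\alpha+1}$ respects this, so I would state and use that $\aorb_{\alpha+1}(x) \subseteq M$ for one $x \in M$ already propagates because each $\overline{U}^{\sorb_\alpha}$ in the defining intersection is $\sorb_\alpha$-closed and Lyapunov-stable-of-degree-$\alpha$ itself, making the chain argument close. I would keep this part short in the final write-up, referring to Remark \ref{remeqcl} and Proposition \ref{propsorb} and spelling out the chain-does-not-escape claim in one or two sentences.
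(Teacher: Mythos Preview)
Your argument for the approximating orbits is exactly the paper's: every closed $\varphi$-invariant (resp.\ open $\sorb_\alpha$-invariant) neighborhood of $M$ is one of $x\in M$, so the intersection defining $\aorb_0(x)$ (resp.\ $\aorb_{\alpha+1}(x)$) is contained in the intersection defining $M$. The paper writes this as a single chain of inclusions and prefaces it with the sentence ``It suffices to show the assertions for the approximating orbits''---and then says nothing more about the superorbits.

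You are right to pause at that sentence. The implication ``$\aorb_{\alpha+1}(y)\subseteq M$ for all $y\in M$'' $\Rightarrow$ ``$\sorb_{\alpha+1}(x)\subseteq M$'' is \emph{not} automatic from the definition of $\sim_{\alpha+1}$: a chain $x=x_1,\dots,x_k$ could in principle pass through a point $x_{i+1}\notin M$ whose $\aorb_{\alpha+1}(x_{i+1})$ still meets $M$. Your discussion circles around this but does not close it; the references to Remark~\ref{remeqcl} and Proposition~\ref{propsorb}\ref{lemma1} alone do not force $x_{i+1}\in M$.

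The clean missing observation is the complementary inclusion: for every $z\notin M$ one has $\aorb_{\alpha+1}(z)\cap M=\emptyset$. Indeed, by Lyapunov stability of degree $\alpha$ there is an open $\sorb_\alpha$-invariant $U\in\mathcal{U}(M)$ with $z\notin\overline{U}^{\sorb_\alpha}$. Set $V:=K\setminus\overline{U}^{\sorb_\alpha}$; this is an open $\sorb_\alpha$-invariant neighborhood of $z$, and since $K\setminus U$ is closed, $\sorb_\alpha$-invariant, and contains $V$, one gets
\[
\aorb_{\alpha+1}(z)\ \subseteq\ \overline{V}^{\sorb_\alpha}\ \subseteq\ K\setminus U\ \subseteq\ K\setminus M.
\]
Now the chain argument goes through by induction on $i$: if $x_i\in M$ then $\aorb_{\alpha+1}(x_i)\subseteq M$, so $\aorb_{\alpha+1}(x_{i+1})$ meets $M$, forcing $x_{i+1}\in M$; hence every $y\sim_{\alpha+1}x$ lies in $M$ and $\sorb_{\alpha+1}(x)=\bigcup_{y\sim_{\alpha+1}x}\aorb_{\alpha+1}(y)\subseteq M$. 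The same reasoning with closed $\varphi$-invariant neighborhoods handles part~(1). Add this one paragraph and your proof is complete---and in fact more complete than the paper's, which leaves precisely this step to the reader.
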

\end{samepage}

\begin{proof}
It suffices to show the assertions for the approximating orbits.
\begin{enumerate}
\item
Since $K$ is a Hausdorff space,
we have
\begin{eqnarray*}
\aorb_0(x)=\bigcap\limits_{\substack{U\in\mathcal{U}(x)\text{ closed},\\\varphi(U)\subseteq U}}U\subseteq\bigcap\limits_{\substack{U\in\mathcal{U}(M)\text{ closed},\\\varphi(U)\subseteq U}}U =\bigcap\limits_{\substack{U\in\mathcal{U}(M),\\\varphi(U)\subseteq U}}U=M.
\end{eqnarray*}
\item
By definition,
\begin{eqnarray*}
\aorb_{\alpha+1}(x)=\bigcap\limits_{\substack{U\in\mathcal{U}(x)\text{ open},\\ \sorb_\alpha(U)\subseteq U}}\overline{U}^{\sorb_\alpha}\subseteq \bigcap\limits_{\substack{U\in\mathcal{U}(M)\text{ open},\\ \sorb_\alpha(U)\subseteq U}}\overline{U}^{\sorb_\alpha}=M.
\end{eqnarray*}
\end{enumerate}
\end{proof}

Remark \ref{liapincl} and Lemma \ref{liapthm} yield the following result.
\begin{theorem}
\label{absliap}
The finest decomposition into absolutely Lyapunov stable sets is induced by $\fix\T$.
\end{theorem}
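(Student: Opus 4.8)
The plan is to prove two things: first, that every maximal level set of $\fix\T$ is absolutely Lyapunov stable, so that the decomposition induced by $\fix\T$ is indeed \emph{a} decomposition into absolutely Lyapunov stable sets; and second, that this decomposition is the finest such one, i.e., any decomposition of $K$ into absolutely Lyapunov stable sets is coarser than (or equal to) the one induced by $\fix\T$. Together these give the claim.

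For the first part, fix a maximal level set $M$; by Remark \ref{maxlevset}\ref{maxlevsetchar} we have $M=p^{-1}(\{l\})$ for some $l\in L$. I must show $M$ is Lyapunov stable of degree $\alpha$ for every $\alpha\in\mathrm{Ord}$, i.e., $M=\bigcap_{U\in\mathcal{U}(M)\text{ open},\,\sorb_\alpha(U)\subseteq U}\overline{U}^{\sorb_\alpha}$. The inclusion $M\subseteq\bigcap(\dots)$ is immediate since $M$ lies in every neighborhood of $M$ and hence in every $\sorb_\alpha$-closure of such a neighborhood. For the reverse inclusion, given $\varepsilon>0$ and $\hat f\in\mathrm{C}(L)$, pull back to $f:=\hat f\circ p\in\fix\T$; then $V:=f^{-1}(B_\varepsilon(\hat f(l)))$ is an open neighborhood of $M$ with $\sorb_\alpha(V)=V$ (because $f$ is constant on each superorbit, as every superorbit is a level set, so $V$ is a union of superorbits and Proposition \ref{propsorb}\ref{lemma1} applies), and moreover $\overline{V}^{\sorb_\alpha}\subseteq f^{-1}(\overline{B_\varepsilon(\hat f(l))})$ by exactly the argument used in the proof of Proposition \ref{levelfinite} (the set $f^{-1}(\overline{B_\varepsilon(\hat f(l))})$ is $\sorb_\alpha$-invariant and closed and contains $V$). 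Hence any point $z$ in the big intersection satisfies $|f(z)-\hat f(l)|\le\varepsilon$ for all $\varepsilon>0$, so $f(z)=\hat f(l)=f(x)$ for all $f\in\fix\T$, i.e., $z$ lies in the maximal level set through $M$, which is $M$. This shows $M$ is Lyapunov stable of degree $\alpha$ for every $\alpha$, hence absolutely Lyapunov stable.

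For the second part, suppose $K=\dot\bigcup_{j}M_j$ is any decomposition into absolutely Lyapunov stable sets; I must show each $M_j$ is contained in a single maximal level set, equivalently that every $f\in\fix\T$ is constant on each $M_j$. Choose, by Theorem \ref{stationary}, an ordinal $\alpha$ (which we may take to be a successor) with $\sorb_\alpha=\sorb_{\alpha+1}$ pointwise; by the Main Theorem \ref{charfix} the quotient $K/{\sim_\alpha}$ is homeomorphic to $L$, and the maximal level sets are exactly the superorbits $\sorb_\alpha(x)$. Now fix $x\in M_j$. Since $M_j$ is Lyapunov stable of degree $\alpha$, Lemma \ref{liapthm} gives $\sorb_{\alpha+1}(x)\subseteq M_j$, and $\sorb_{\alpha+1}(x)=\sorb_\alpha(x)$ is the maximal level set through $x$; thus the maximal level set through $x$ is contained in $M_j$. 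Since this holds for every $x\in M_j$ and the maximal level sets partition $K$, $M_j$ is a union of maximal level sets. So the decomposition $(M_j)$ is coarser than the one induced by $\fix\T$, which is therefore the finest decomposition into absolutely Lyapunov stable sets.

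I expect the main obstacle to be the first part, specifically verifying that the witnessing neighborhood $V=f^{-1}(B_\varepsilon(\hat f(l)))$ satisfies $\sorb_\alpha(V)=V$ and that $\overline{V}^{\sorb_\alpha}\subseteq f^{-1}(\overline{B_\varepsilon(\hat f(l))})$ cleanly at \emph{limit} ordinals as well as successors; this is handled uniformly by the observation that $f$ being a fixed function makes $f^{-1}(C)$ a $\sorb_\alpha$-invariant set for any $C\subseteq\mathbb{C}$ (since $f$ is constant on every superorbit of any degree, as all superorbits are level sets of $\fix\T$) together with the computation in the proof of Proposition \ref{levelfinite}, so no genuinely new case analysis is needed. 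The remaining steps — the trivial inclusions, the appeals to Lemma \ref{liapthm}, Theorem \ref{stationary}, and Main Theorem \ref{charfix} — are routine.
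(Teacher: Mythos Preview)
Your proof is correct. The second part (finest) is essentially the paper's one-line appeal to Lemma~\ref{liapthm}, spelled out in full via Theorem~\ref{stationary} and the Main Theorem~\ref{charfix}; nothing to add there.

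In the first part you take a different tack than the paper. The paper reduces via Remark~\ref{liapincl} to the single stationary ordinal $\alpha$ with $K/{\sim_\alpha}\cong L$, and then works in that Hausdorff quotient: for each closed neighborhood $V\in\mathcal{U}([x])$ it shows that $U:=\pi^{-1}(V)^\circ$ is open and $\sorb_\alpha$-saturated with $\overline{U}^{\sorb_\alpha}=\pi^{-1}(V)$, and then intersects over all such $V$ to recover $M$ by Hausdorffness. You instead argue directly and uniformly over \emph{all} ordinals $\alpha$, using fixed functions $f=\hat f\circ p$ to produce the witnessing open sets $V=f^{-1}(B_\varepsilon(\hat f(l)))$, exactly mirroring the proof of Proposition~\ref{levelfinite}. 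Your route has the virtue of not invoking the Main Theorem for this direction and of showing Lyapunov stability at every degree without first passing through Remark~\ref{liapincl}; the paper's route has the virtue of making the link between absolute Lyapunov stability and the Hausdorffization picture explicit. Both are clean, and your worry about limit ordinals is, as you note yourself, unfounded: the only fact needed is that $f^{-1}(C)$ is $\sorb_\alpha$-saturated for every $\alpha$, which follows from every superorbit being a level set of $\fix\T$.
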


\begin{proof}
We first show that the maximal level sets of $\fix\T$ are absolutely Lyapunov stable. By Remark \ref{liapincl} it suffices to show that a maximal level set $M$ is Lyapunov stable of degree $\alpha$ where $L\cong K/{\sim_\alpha}$ for the fixed factor $L$. Let $x\in K$ such that $M=\pi^{-1}([x])$ where $\pi\colon K\to K/{\sim_\alpha}$ denotes the canonical projection. We first show that for all closed $V\in\mathcal{U}([x])$ there is some open $U\subseteq K$ with $\sorb_\alpha(U)\subseteq U$ such that $\pi^{-1}(V)=\overline{U}^{\sorb_\alpha}$.

Take $U:=\pi^{-1}(V)^\circ$. Clearly, U is open and $\sorb_\alpha(U)=U$ because of $\sorb_\alpha(U)=\pi^{-1}(\pi(U))$. We show that $\pi^{-1}(V)=\overline{U}^{\sorb_\alpha}$. By continuity of $\pi$, we have \[\pi^{-1}(V)\subseteq\overline{\pi^{-1}(V)}^{\sorb_\alpha}=\overline{\pi^{-1}(V)^\circ}^{\sorb_\alpha}=\overline{\pi^{-1}(V^\circ)}^{\sorb_\alpha}=\overline{U}^{\sorb_\alpha}.\] Conversely, \[\overline{U}^{\sorb_\alpha}=\overline{\pi^{-1}(V)}^{\sorb_\alpha}=\bigcap\limits_{\substack{F\text{ closed,}\\\pi^{-1}(V)\subseteq F,\\\sorb_\alpha(F)\subseteq F}}F\overset{F:=\pi^{-1}(V)}{\subseteq} \pi^{-1}(V).\]

By this we obtain

\begin{eqnarray*}
\bigcap\limits_{\substack{U\in\mathcal{U}(M)\text{ open,}\\\sorb_\alpha(U)\subseteq U}}\overline{U}^{\sorb_\alpha}\subseteq\bigcap_{V\in\mathcal{U}([x])\text{ closed}}\pi^{-1}(V)=\pi^{-1}\left(\bigcap_{V\in\mathcal{U}([x])\text{ closed}}V\right)\overset{K/{\sim_\alpha}\text{ Hausdorff}}{=}\pi^{-1}([x])=M. 
\end{eqnarray*}

The converse inclusion \[M\subseteq\bigcap\limits_{\substack{U\in\mathcal{U}(M)\text{ open,}\\\sorb_\alpha(U)\subseteq U}}\overline{U}^{\sorb_\alpha}\] shows that $M$ is Lyapunov stable of degree $\alpha$.

That there is no finer decomposition into absolutely Lyapunov stable sets follows from Lemma \ref{liapthm}.
\end{proof}

As a final result, we link absolute Lyapunov stability and topological ergodicity.

\begin{theorem}
A topological dynamical system $(K;\varphi)$ is topologically ergodic if and only if there is no nontrivial decomposition of $K$ into absolutely Lyapunov stable sets.
\end{theorem}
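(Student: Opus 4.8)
The plan is to prove the equivalence by reducing both sides to a statement about the fixed factor $L$, using Theorem \ref{absliap} as the bridge. Recall that topological ergodicity means there exist $x\in K$ and $\gamma\in\mathrm{Ord}$ with $\sorb_\gamma(x)=K$. By Theorem \ref{stationary} and Main Theorem \ref{charfix}, the process of forming superorbits becomes stationary at some ordinal $\alpha$ with $L\cong K/{\sim_\alpha}$, and $\sorb_\alpha(x)=\pi_\alpha^{-1}([x]_\alpha)=p^{-1}(\{l\})$ for the corresponding $l\in L$. Hence $\sorb_\gamma(x)=K$ for some $\gamma$ is equivalent (using Proposition \ref{propsorb}.\ref{invariance}, so that $\sorb_\gamma(x)\subseteq\sorb_\alpha(x)$ once $\gamma\le\alpha$, and the stationarity for $\gamma\ge\alpha$) to $\sorb_\alpha(x)=K$, i.e.\ to $p^{-1}(\{l\})=K$ for some $l$, i.e.\ to $L$ being a singleton, i.e.\ to $\dim\fix\T=1$. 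So topological ergodicity is the same as $\dim\fix\T=1$ (this is just Main Theorem \ref{charonedim}, which I may cite directly).

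Next I would reformulate the right-hand side. A decomposition of $K$ into absolutely Lyapunov stable sets is trivial (consists of the single set $K$) if and only if the \emph{finest} such decomposition is trivial: indeed if the finest decomposition is trivial then it is the only one, and conversely the finest refines every other, so if some decomposition is trivial the finest one — being a refinement — must also be the singleton $\{K\}$. By Theorem \ref{absliap} the finest decomposition into absolutely Lyapunov stable sets is exactly the one induced by $\fix\T$, namely $K=\dot\bigcup_{l\in L}p^{-1}(\{l\})$. This decomposition is trivial precisely when $L$ is a singleton, i.e.\ when $\dim\fix\T=1$.

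Combining the two reformulations: ``$(K;\varphi)$ is topologically ergodic'' $\iff$ ``$\dim\fix\T=1$'' $\iff$ ``$L$ is a singleton'' $\iff$ ``the finest decomposition of $K$ into absolutely Lyapunov stable sets is trivial'' $\iff$ ``there is no nontrivial decomposition of $K$ into absolutely Lyapunov stable sets''. Writing this up is mostly a matter of carefully stringing together Theorems \ref{absliap}, \ref{charonedim} (and \ref{charfix}, \ref{stationary}) rather than proving anything new.

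The only genuine subtlety — the ``main obstacle'' — is the bookkeeping around the word \emph{trivial}: one must be careful that ``no nontrivial decomposition into absolutely Lyapunov stable sets'' really does force the \emph{finest} one to be $\{K\}$, and not merely that \emph{some} decomposition is $\{K\}$ while a finer genuine one could still exist. This is handled by the observation that $\{K\}$ is the coarsest possible decomposition, so if it is the only one it is in particular the finest; hence applying Theorem \ref{absliap} to identify the finest decomposition with the $\fix\T$-decomposition closes the loop. I should also make explicit that $K$ itself is always absolutely Lyapunov stable (it is its own only neighborhood, and $\sorb_\alpha(K)=K$, so the defining intersection trivially equals $K$ for every $\alpha$), so that the trivial decomposition $\{K\}$ always exists and the statement is not vacuous.
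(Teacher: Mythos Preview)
Your proposal is correct and matches the paper's approach: the paper states this theorem without proof, immediately after Theorem~\ref{absliap}, as a direct consequence of combining Theorem~\ref{absliap} with Main Theorem~\ref{charonedim}, and your write-up is precisely that combination made explicit. The bookkeeping you flag about ``trivial'' decompositions and the observation that $K$ itself is absolutely Lyapunov stable are the only details needed to fill the gap, and you handle both correctly.
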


\end{document}